\def\({\left(}
\def\){\right)}
\newcommand{\niton}{\not\owns}
\def\Cal{\mathcal}
\def\eb{\varepsilon}
\def\R {\mathbb{R}}
\newcommand{\be}{\begin{equation} }
\newcommand{\ee}{\end{equation} }
\def\Q {{\mathbb Q}}
\def \and{\qquad\text{and}\qquad}
\def\Bbb{\mathbb}
\def\({\left(}
\def\){\right)}
\def\eb{\varepsilon}
\def\Cal{\mathcal}
\def\eb{\varepsilon}
\def\R {\mathbb{R}}
\def\Q {{\mathbb Q}}
\def\<{\left<}
\def\>{\right>}
\def \and{\qquad\text{and}\qquad}
\def\Bbb{\mathbb}
\newtheorem{proposition}{Proposition}[section]
\newtheorem{theorem}[proposition]{Theorem}
\newtheorem{corollary}[proposition]{Corollary}
\newtheorem{lemma}[proposition]{Lemma}
\theoremstyle{definition}
\newtheorem{definition}[proposition]{Definition}
\newtheorem{remark}[proposition]{Remark}
\numberwithin{equation}{section}
\def\be{\begin{equation}}
\def\ee{\end{equation}}
\def\bp{\begin{proof}}
\def\ep{\end{proof}}
\def \no#1#2#3 {{\bf #1} (#3), #2.}
\def \eds#1#2#3 {#1, #2, #3.}
\title[Convergence of lattice sums]
{Cesaro summation by spheres of lattice sums and Madelung constants }
\author[B. Galbally  and  S. Zelik]
{ Benjamin Galbally${}^1$  and Sergey Zelik${}^{1,2}$}
\address{${}^1$
Department of Mathematics,\newline \indent
University of Surrey, GU27XH,
Guildford,  UK}
\address{${}^2$ \phantom{e}School of Mathematics and Statistics, Lanzhou University, Lanzhou
\newline\indent 730000,
P.R. China}
\email{bg00298@surrey.ac.uk}
\email{s.zelik@surrey.ac.uk}
\begin{document}
\begin{abstract}
We study  convergence of 3D lattice sums via  expanding spheres. It is well-known that, in contrast to summation via  expanding cubes, the expanding spheres method may lead to formally divergent series (this will be so e.g. for the classical NaCl-Madelung constant). In the present paper we prove that these series remain convergent in Cesaro sense. For the case of second order Cesaro summation, we present an elementary proof of  convergence and the proof for first order Cesaro summation is more involved and is based on the Riemann localization for multi-dimensional Fourier series.
\end{abstract}
\subjclass[2010]{11L03, 42B08, 35B10, 35R11}
\keywords{lattice sums, Madelung constants, Cesaro summation, Fourier series, Riemann localization}
\thanks{The first author has been partially supported by the LMS URB grant 1920-04. The second author is partially supported by the EPSRC grant EP/P024920/1}
\maketitle
\tableofcontents
\section{Introduction}\label{s0}
Lattice sums of the form
\begin{equation}\label{0.lattice}
\sum_{(n,k,m)\in\Bbb Z^3}\frac{e^{i(nx_1+kx_2+mx_3)}}{(a^2+n^2+k^2+m^2)^s}
\end{equation}
and various their extensions naturally appear in many branches of modern analysis including analytic number theory (e.g. for study the number of lattice points in spheres or balls), analysis of PDEs (e.g. for constructing Green functions for various differential operators in periodic domains, finding best constants in interpolation inequalities, etc.), harmonic analysis as well as in applications, e.g. for computing the electrostatic potential of a single ion in a crystal (the so-called Madelung constants), see \cite{Flap,MFS,BDZ,Bor13,mar,mar2000,Ram21,ZI} and references therein.  For instance, the classical Madelung constant for the NaCl crystal is given by
\begin{equation}\label{0.M}
M=\sideset{}{'}\sum_{(i,j,k)\in \Bbb Z^3}\frac{(-1)^{i+j+k}}{(i^2+j^2+k^2)^{1/2}},
\end{equation}
where the index ${'}$ means that the sum does not contain the term which corresponds to $(i,j,k)=0$.
\par
The common feature of series \eqref{0.lattice} and \eqref{0.M} is that the decay rate of the terms is not strong enough to provide absolute convergence, so they are often only conditionally convergent and their convergence/divergence strongly depends on the method of summation. The typical methods of summation are  summation by expanding cubes/rectangles or summation by expanding spheres, see sections \S\ref{s1} and \S\ref{s2} for definitions and \cite{Bor13} for more details. For instance, when summation by expanding spheres is used, the formula for the Madelung constant has an especially elegant form
\begin{equation}\label{2.Ms}
M=\sum_{n=1}^\infty (-1)^n\frac{r_3(n)}{\sqrt{n}},
\end{equation}
where $r_3(n)$ is the number of integer point in a sphere of radius $\sqrt{n}$. Exactly this formula is commonly used in physical literature although it has been known for more than 70 years that series \eqref{2.Ms} is {\it divergent}, see \cite{Emer}.  Thus, one should either switch from expanding spheres to expanding cubes/rectangles for summation of \eqref{0.M} (which is suggested to do e.g. in \cite{Bor13} and where such a convergence problem does not appear)  or to use more advanced methods for summation of \eqref{2.Ms}, for instance Abel or Cesaro summation. Surprisingly, the possibility to justify \eqref{2.Ms} in such a way is not properly studied (although there are detailed results concerning Cesaro summation for different methods, e.g. for the so called summation by diamonds, see \cite{Bor13}) and the main aim of the present  notes is to cover this gap.
\par
Namely, we will study the following generalized Madelung constants:
\begin{equation}\label{2.Mg}
M_{a,s}=\sideset{}{'}\sum_{(i,j,k)\in \Bbb Z^3}\frac{(-1)^{i+j+k}}{(a^2+i^2+j^2+k^2)^s}=\sum_{n=1}^\infty(-1)^n\frac{r_3(n)}{(a^2+n)^s},
\end{equation}
where $a\in\R$ and $s>0$ and the sum in the RHS is understood in the sense of Cesaro (Cesaro-Riesz) summation of order $\kappa$, see Definition \ref{Def2.Cesaro} below. Our presentation of the main result consists of two parts.
\par
First, we present a very elementary proof of convergence for second order Cesaro summation which is based only on counting the number of lattice points in spherical layers by volume comparison arguments. This gives the following result
\begin{theorem}\label{Th0.c2} Let $a\in\R$ and $s>0$. Then
\begin{equation}
M_{a,s}=\lim_{N\to\infty}\sum_{n=1}^N(-1)^n\(1-\frac nN\)^2 \frac{r_3(n)}{(a^2+n)^s}.
\end{equation}
In particular, the limit in the RHS exists.
\end{theorem}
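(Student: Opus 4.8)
The plan is to rewrite the $N$-th partial sum as a single weighted sum over the lattice and then to exploit the parity of the weight by an elementary shift-symmetrization, reserving quantitative volume comparison for the error terms. Writing $n=i^2+j^2+k^2=|v|^2$ with $v=(i,j,k)\in\Bbb Z^3$ and using $(-1)^n=(-1)^{i+j+k}$, I would set $\phi_N(t)=(1-t/N)_+^2(a^2+t)^{-s}$ and record that the quantity in the theorem is
\[ b_N:=\sum_{n=1}^N(-1)^n\(1-\frac nN\)^2\frac{r_3(n)}{(a^2+n)^s}=\sum_{v\in\Bbb Z^3}(-1)^{i+j+k}\phi_N(|v|^2), \]
where $\phi_N$ is radial, compactly supported in $|v|^2\le N$, and -- this is where the order-$2$ factor matters -- has both its value and its first derivative vanishing at the cut-off $t=N$, so that boundary terms in any summation by parts disappear.

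The engine of the proof is the observation that translating one coordinate by a unit, $v\mapsto v-e_1$, reverses the sign $(-1)^{i+j+k}$ while permuting $\Bbb Z^3$. For the finitely supported summand $F(v):=\phi_N(|v|^2)$ this yields the exact identity
\[ \sum_{v}(-1)^{i+j+k}F(v)=\tfrac12\sum_{v}(-1)^{i+j+k}\big(F(v)-F(v-e_1)\big), \]
which replaces $F$ by a first difference, i.e. morally by $2v_1\,\phi_N'(|v|^2)$. Equivalently, splitting $\Bbb Z^3$ into the even sublattice $\{i+j+k\ \text{even}\}$ and its odd coset and comparing each lattice sum with the common integral $\tfrac12\int_{\R^3}\phi_N(|y|^2)\,dy$, the leading volume contributions cancel identically: this is the ``counting by volume comparison'' that makes the spherical method summable in the Cesaro sense. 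I would iterate the identity $\kappa$ times, writing $b_N=2^{-\kappa}\sum_v(-1)^{i+j+k}\Delta_{e_1}^\kappa F(v)$, each step gaining a factor $\sim N^{-1/2}$ in the bulk region $|v|^2\asymp N$, where $\phi_N$ varies on the scale $\sqrt N$.

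The two regimes are then handled separately by a split $|v|\le R$ versus $|v|>R$ with $R=R(N)\to\infty$ slowly. On the slowly varying tail I would estimate the $\ell^1$-mass of the iterated differences by volume comparison, $\sum_v|\Delta_{e_1}^\kappa F(v)|\lesssim\int_{\R^3}|\partial_{v_1}^\kappa F|$, and check that this part is $o(1)$ as first $N\to\infty$ and then $R\to\infty$; the surviving contribution localises to bounded $|v|$, where $(1-|v|^2/N)^2\to1$, and reassembles into $M_{a,s}$. For $s>1$ this can be short-circuited altogether: the elementary volume bound $P(x):=\sum_{n\le x}(-1)^n r_3(n)=O(x)$, valid because the even and odd sublattices have equal density so the $x^{3/2}$ terms cancel, already makes two summations by parts against $\phi_N$ converge (the boundary terms vanishing by the order-$2$ factor) and identifies the limit with the absolutely convergent value of \eqref{2.Mg}.

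The step I expect to be the main obstacle is the regime $0<s\le1$, which contains the Madelung case $s=\tfrac12$, where mere size bounds such as $P(x)=O(x)$ are insufficient and genuine cancellation must be retained. Concretely, the difficulty is to control, uniformly in $N$ and down to $s\to0^+$, both the boundary layer $|v|^2\approx N$ -- where $\phi_N$ has only two vanishing derivatives, so only finitely many symmetrizations are effective and the truncation at radius $R$ produces spurious interface terms -- and, when $a=0$, the mild singularity of $(a^2+|v|^2)^{-s}$ at the origin, and then to match the localised limit cleanly with $M_{a,s}$, for instance through consistency with the absolutely convergent range $s>3/2$ together with the uniqueness of the Cesaro value.
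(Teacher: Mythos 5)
Your symmetrization identity is correct and is a genuinely interesting engine, but the proposal as written does not prove the theorem: by your own admission it leaves open the range $0<s\le 1$, and that range is the entire content of the statement. For $s>\tfrac32$ the series converges absolutely, and for $s>1$ your bound $P(x)=\sum_{n\le x}(-1)^nr_3(n)=O(x)$ plus a single partial summation already gives convergence of the \emph{unweighted} spherical sums (plain spherical convergence is in fact known for $s>\tfrac{25}{34}$, as remarked in the paper), so in the only regime you complete, the Cesaro weight buys nothing; the theorem matters precisely for $s\le\tfrac12$, e.g. the NaCl case $a=0$, $s=\tfrac12$. Your method stalls there for a quantifiable reason: after $\kappa=2$ symmetrizations the $\ell^1$-mass of the differenced summand is
\[
\sum_{|v|^2\le N}\bigl|\Delta_{e_1}^2F(v)\bigr|\;\asymp\;\sum_{1\le|v|^2\le N}(a^2+|v|^2)^{-s-1}\;\asymp\; N^{\frac12-s},
\]
which diverges for every $s\le\tfrac12$, so absolute estimates after two symmetrizations cannot close the argument, and you offer no mechanism for retaining cancellation beyond that point. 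Two further remarks. First, your perceived obstruction (that the $C^1$ cut-off permits only two effective symmetrizations) is not actually there: a \emph{third} difference still works, because on the $O(N)$ lattice points whose stencil crosses the sphere $|v|^2=N$ one still has $|\Delta_{e_1}^3F(v)|\lesssim N^{-s-1}$ (it is controlled by second derivatives, using $|v_1|\lesssim\sqrt N$ and the $1/N$ factors carried by $\phi_N'$, $\phi_N''$), so that shell contributes $O(N^{-s})$, while in the bulk $|\Delta_{e_1}^3F(v)|\lesssim(a^2+|v|^2)^{-s-\frac32}$ uniformly in $N$; with $\kappa=3$ your scheme would close for every $s>0$. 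Second, even then the identification step is missing: ``consistency with $s>\tfrac32$ plus uniqueness of the Cesaro value'' is not an argument, since for $s\le\tfrac32$ the quantity $M_{a,s}$ is \emph{defined} through rectangle summation, and your localized limit $\tfrac18\sum_v(-1)^{i+j+k}\Delta_{e_1}^3\bigl[(a^2+|v|^2)^{-s}\bigr]$ would have to be compared with that definition directly, not via an analytic-continuation statement you have not formulated.

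For contrast, the paper's proof handles all $s>0$ at once with two devices absent from your sketch. Instead of pure one-directional differences it groups terms into $2\times2\times2$ sign blocks $E_{I,J,K}$, i.e.\ the \emph{mixed} difference $\Delta_{e_1}\Delta_{e_2}\Delta_{e_3}$, which by Lemma \ref{Lem1.block} and \eqref{1.bet} is $O\bigl((a^2+|v|^2)^{-s-\frac32}\bigr)$; hence the block series converges absolutely for every $s>0$ and its sum is $M_{a,s}$ (Corollary \ref{Cor1.main}). And instead of differencing the weight, it exploits that the Riesz weight is a quadratic polynomial in $a^2+|v|^2$,
\[
\Bigl(1-\tfrac{|v|^2}{N}\Bigr)^2=\Bigl(1+\tfrac{a^2}{N}\Bigr)^2-\tfrac{2}{N}\Bigl(1+\tfrac{a^2}{N}\Bigr)(a^2+|v|^2)+\tfrac{1}{N^2}(a^2+|v|^2)^2,
\]
so over the union $D_N$ of complete blocks inside $B_N$ the weighted sum is \emph{exactly} a combination of block sums with exponents $s$, $s-1$, $s-2$; these are $O(1)$, $O(N^{1-s})$, $O(N^{2-s})$ by the block bounds, so the two corrections, carrying prefactors $N^{-1}$ and $N^{-2}$, are $O(N^{-s})$, while the leading term converges to $M_{a,s}$ by absolute convergence --- no truncation radius and no matching problem. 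The boundary layer $B_N\setminus D_N$ lies in the shell $\sqrt N-\sqrt3\le|v|\le\sqrt N$, where the weight is $O(1/N)$, the point count is $O(N)$ by volume comparison, and each term is $O(N^{-s})$. Note that what the exponent $2$ buys in the paper is the \emph{smallness} of the weight at the cut-off, not its smoothness, which was the feature your construction was organized around.
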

Second, we establish the convergence for the first order Cesaro summation.
\begin{theorem}\label{Th0.c1} Let $a\in\R$ and $s>0$. Then
\begin{equation}
M_{a,s}=\lim_{N\to\infty}\sum_{n=1}^N(-1)^n\(1-\frac nN\) \frac{r_3(n)}{(a^2+n)^s}.
\end{equation}
In particular, the limit in the RHS exists.
\end{theorem}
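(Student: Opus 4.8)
The plan is to recognize the first-order Cesaro sum by spheres as the Bochner--Riesz mean of critical order in dimension three, and to prove its convergence by a Riemann localization argument based at a point where the summed function is real-analytic. I would begin by introducing the generating function
$$\Phi(x)=\sum_{\xi\in\mathbb Z^3}\frac{e^{i\xi\cdot x}}{(a^2+|\xi|^2)^s},\qquad x\in\T^3=(\R/2\pi\mathbb Z)^3,$$
the kernel of $(a^2-\Delta)^{-s}$ on the torus. Since $\xi_1+\xi_2+\xi_3\equiv|\xi|^2\pmod 2$, evaluating its first-order Riesz (Cesaro) mean by spheres at $x=(\pi,\pi,\pi)$ collapses the frequencies of equal length and produces exactly $\sum_{n=0}^N(1-n/N)(-1)^n r_3(n)(a^2+n)^{-s}$. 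Hence, up to the explicit contribution of the frequency $\xi=0$, the statement is equivalent to $\sigma_N^1\Phi(\pi,\pi,\pi)\to\Phi(\pi,\pi,\pi)$. The weight $(1-|\xi|^2/N)$ is precisely the Bochner--Riesz factor $(1-|\xi|^2/R^2)$ with $R=\sqrt N$ and index $\delta=1=(d-1)/2$ for $d=3$, that is, the \emph{critical} index, which is the source of all the difficulty. The decisive geometric fact is that the only singularity of $\Phi$ on $\T^3$ sits at the lattice point $0$, while the evaluation point $(\pi,\pi,\pi)$ is its antipode, so $\Phi$ is real-analytic in a neighbourhood of $(\pi,\pi,\pi)$.

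Next I would localize. Fixing a smooth cut-off $\chi$ with $\chi\equiv 1$ near $(\pi,\pi,\pi)$ and $\operatorname{supp}\chi$ away from $0$, I split $\Phi=\Phi_0+\Phi_1$ with $\Phi_0=\chi\Phi$ and $\Phi_1=(1-\chi)\Phi$. The function $\Phi_0$ is smooth, so its Fourier coefficients decay rapidly, and an elementary estimate splitting the frequencies $|\xi|^2\le N$ and $|\xi|^2>N$ gives $\sigma_N^1\Phi_0(\pi,\pi,\pi)\to\Phi_0(\pi,\pi,\pi)=\Phi(\pi,\pi,\pi)$. The entire problem is thereby reduced to the localization statement $\sigma_N^1\Phi_1(\pi,\pi,\pi)\to 0$, where $\Phi_1\in L^1(\T^3)$ vanishes near $(\pi,\pi,\pi)$ and carries the full singularity of $\Phi$ at $0$, which is integrable precisely because $s>0$ forces the exponent $2s-3>-3$.

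Finally I would write $\sigma_N^1\Phi_1(\pi,\pi,\pi)=\int_{\T^3}D_N\big((\pi,\pi,\pi)-y\big)\,\Phi_1(y)\,dy$, where $D_N(z)=\sum_{|\xi|^2\le N}(1-|\xi|^2/N)e^{i\xi\cdot z}$ is the periodic Riesz kernel. By Poisson summation $D_N$ is the periodization of the Euclidean critical Bochner--Riesz kernel, whose Bessel asymptotics give $\mathcal K_{\sqrt N}^1(z)\sim c\,|z|^{-3}\cos(\sqrt N\,|z|+\psi)$ as $N\to\infty$. On $\operatorname{supp}\Phi_1$ the argument $(\pi,\pi,\pi)-y$ stays bounded away from $0$, since it vanishes only at the excluded point $y=(\pi,\pi,\pi)$, so the kernel is in its purely oscillatory regime there with a bounded envelope; at the same time the singularity of $\Phi_1$ at $y=0$ meets the kernel exactly where the latter is smooth. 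As the phase $y\mapsto|(\pi,\pi,\pi)-y|$ has unit gradient and the amplitude $|(\pi,\pi,\pi)-y|^{-3}\,\Phi_1(y)$ lies in $L^1$, a Riemann--Lebesgue / non-stationary-phase argument yields that the integral tends to $0$.

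The main obstacle, and the reason the second-order case of Theorem \ref{Th0.c2} is comparatively trivial, is the criticality of the index $\delta=1$: the Euclidean kernel decays only like $|z|^{-3}$, so it is not absolutely integrable at infinity and its periodization $\sum_{m}|z-2\pi m|^{-3}$ is logarithmically divergent. The convergence therefore cannot be obtained from an absolutely convergent majorant and must instead be extracted from the oscillation $\cos(\sqrt N|z|+\psi)$ alone, uniformly in $N$ and uniformly against the singular amplitude. Making this rigorous, by controlling the error terms in the Bessel asymptotics and the conditionally convergent lattice sum while preserving the cancellation, is the technical heart of the argument and is exactly the point at which the Riemann localization for multi-dimensional Fourier series is needed.
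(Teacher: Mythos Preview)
Your approach is essentially the same as the paper's: introduce the torus kernel of $(a^2-\Delta)^{-s}$, note that its only singularity is at $0$ while the evaluation point is $(\pi,\pi,\pi)$, and then invoke Riemann localization for spherical (Bochner--Riesz) means at the critical index $\kappa=(d-1)/2=1$. The paper cites Stein's localization theorem directly rather than sketching a kernel argument, and it uses the already-proved second-order result (Theorem~\ref{Th0.c2}) to identify the limit with the rectangle-summed constant $M_{a,s}$.

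One point to tighten: you assert only that $\Phi_1\in L^1$, but Stein's localization at the critical index does \emph{not} hold for general $L^1$ functions; it requires $\int|f|\ln_+|f|<\infty$. The paper is careful to check that the singularity $|x|^{2s-3}$ with $s>0$ places $M_{a,s}$ in $L^{1+\varepsilon}(\mathbb T^3)$ for some $\varepsilon>0$, which is what actually feeds into Stein's theorem. Your intermediate ``Riemann--Lebesgue / non-stationary-phase'' sketch is therefore not sufficient on its own (the critical-index kernel is not a genuine oscillatory integral against an $L^1$ amplitude in any way that makes this immediate), and you correctly concede in the last paragraph that the established localization theory is what is really doing the work---just be sure to state the correct integrability hypothesis when you invoke it.
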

In contrast to Theorem \ref{Th0.c2}, the proof of this result is more involved and is based on an interesting connection between the convergence of lattice sums and  Riemann localization for multiple Fourier series, see section \S\ref{s22} for more details.  Note that Theorem \ref{Th0.c2} is a formal corollary of Theorem \ref{Th0.c1}, but we prefer to keep both of them not only since the proof of Theorem \ref{Th0.c2} is essentially simple, but also since it possesses extensions to other methods of summation, see the discussion in section \S\ref{s3}. Also note that the above convergence results have mainly theoretical interest since much more effective formulas
for Madelung constants are available for practical computations, see \cite{Bor13} and references therein.
\par
The paper is organized as follows. Some preliminary results concerning lattice sums and summation by rectangles are collected in \S\ref{s1}. The proofs of Theorems \ref{Th0.c2} and \ref{Th0.c1} are given in sections \S\ref{s21} and \S\ref{s22} respectively. Some discussion around the obtained results, their possible generalizations and numerical simulations are presented in section \S\ref{s3}.

\section{Preliminaries}\label{s1}
In this section, we recall  standard results about lattice sums and prepare some technical tools which will be used in the sequel. We start with the simple lemma which is however crucial for what follows.
\begin{lemma}\label{Lem1.block} Let the function $f:\R^3\to\R$ be 3 times continuously differentiable in a cube $Q_{I,J,K}:=[I,I+1]\times[J,J+1]\times[K,K+1]$. Then
\begin{multline}\label{1.E}
\min_{x\in Q_{2I,2J,2K}}\{-\partial_{x_1}\partial_{x_2}\partial_{x_3}f(x)\}\le\\\le E_{I,J,K}(f):=\sum_{i=2I}^{2I+1}\sum_{j=2J}^{2J+1}
\sum_{k=2K}^{2K+1}(-1)^{i+j+k}f(i,j,k)\le\\\le \max_{x\in Q_{2I,2J,2K}}\{-\partial_{x_1}\partial_{x_2}\partial_{x_3}f(x)\}.
\end{multline}
\end{lemma}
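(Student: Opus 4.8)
The plan is to recognize the alternating corner sum $E_{I,J,K}(f)$ as a third-order mixed finite difference of $f$ and then to represent that difference as an integral of $\partial_{x_1}\partial_{x_2}\partial_{x_3}f$ over the unit cube $Q_{2I,2J,2K}$, whence the two-sided bound follows at once. First I would exploit that $2I+2J+2K$ is even, so that at the corner $(2I+\epsilon_1,2J+\epsilon_2,2K+\epsilon_3)$ with $\epsilon_\ell\in\{0,1\}$ the sign reduces to $(-1)^{i+j+k}=(-1)^{\epsilon_1+\epsilon_2+\epsilon_3}$. Collecting the eight terms and using that in each single variable the alternating pair contributes a factor $g(m)-g(m+1)=-(g(m+1)-g(m))$, this identifies
\[
E_{I,J,K}(f)=\sum_{\epsilon\in\{0,1\}^3}(-1)^{\epsilon_1+\epsilon_2+\epsilon_3}f(2I+\epsilon_1,2J+\epsilon_2,2K+\epsilon_3)=-\Delta_1\Delta_2\Delta_3 f(2I,2J,2K),
\]
where $\Delta_\ell$ denotes the unit forward difference in the $\ell$-th variable.

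The key step is then the integral representation of this mixed difference. Applying the fundamental theorem of calculus successively in $x_3$, then $x_2$, then $x_1$ --- each step being legitimate because the relevant partial derivatives are continuous on the cube by the regularity assumption --- I would obtain
\[
\Delta_1\Delta_2\Delta_3 f(2I,2J,2K)=\int_{2I}^{2I+1}\int_{2J}^{2J+1}\int_{2K}^{2K+1}\partial_{x_1}\partial_{x_2}\partial_{x_3}f(t_1,t_2,t_3)\,dt_3\,dt_2\,dt_1.
\]
The continuity of $\partial_{x_1}\partial_{x_2}\partial_{x_3}f$ is exactly what justifies peeling off one integral per variable and keeps the iterated integral well defined.

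Finally, the domain of integration is precisely $Q_{2I,2J,2K}$, which has unit volume, so the integral lies between the minimum and the maximum of the continuous integrand over this compact cube. Multiplying the resulting inequalities by $-1$, which reverses their direction, and recalling $E_{I,J,K}(f)=-\Delta_1\Delta_2\Delta_3 f(2I,2J,2K)$, gives exactly the asserted bounds; equivalently, by the mean value theorem for integrals one has $E_{I,J,K}(f)=-\partial_{x_1}\partial_{x_2}\partial_{x_3}f(\xi)$ for some $\xi\in Q_{2I,2J,2K}$. I do not expect a serious obstacle: the only points requiring care are the bookkeeping of the eight signs in the reduction and the justification of the iterated-integral formula. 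One could instead iterate the one-dimensional mean value theorem directly, but then the intermediate point produced at each stage depends on the frozen variables, so the integral form above is cleaner and sidesteps this nuisance. I also note that consistency of the statement requires $f$ to be $C^3$ on the cube $Q_{2I,2J,2K}$, where the corners and the derivative bounds actually live.
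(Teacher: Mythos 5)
Your proof is correct and follows essentially the same route as the paper: both identify $E_{I,J,K}(f)$ with $-\int_{Q_{2I,2J,2K}}\partial_{x_1}\partial_{x_2}\partial_{x_3}f\,dx$ via the Newton--Leibniz (iterated fundamental theorem of calculus) formula and then bound the unit-volume integral by the extrema of the integrand. Your side remark that the regularity hypothesis should really be stated on $Q_{2I,2J,2K}$ (where the sum and the derivative bounds live) is a fair observation about the statement's indexing, but the argument itself is exactly the paper's.
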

\begin{proof} Indeed, it is not difficult to check using the Newton-Leibnitz formula that
$$
E_{I,J,K}(f)=-\int_0^1\int_0^1\int_0^1 \partial_{x_1}\partial_{x_2}\partial_{x_3}f(2I+s_1,2J+s_2,2K+s_3)\,ds_1\,ds_2\,ds_3
$$
and this formula gives the desired result.
\end{proof}
\noindent A typical example of the function $f$ is the following one
\begin{equation}\label{1.pol}
f_{a,s}(x)=(a^2+|x|^2)^s,\ \ |x|^2=x_1^2+x_2^2+x_3^2.
\end{equation}
In this case,
$$
\partial_{x_1}\partial_{x_2}\partial_{x_3}f= 8s(s-1)(s-2)x_1x_2x_3(a^2+|x|^2)^{s-3/2}
$$
and, therefore,
\begin{equation}\label{1.bet}
|E_{I,J,K}(f)|\le C(a^2+I^2+J^2+K^2)^{s-\frac32}.
\end{equation}
 One more important property of the function \eqref{1.pol} is that the term $E_{I,J,K}$ is sign-definite in the octant $I,J,K\ge0$.
 \par
 At the next step, we state a straightforward extension of the integral comparison principle to the case of multi-dimensional series. We recall that, in one dimensional case, for a positive monotone decreasing  function $f:[A,B]\to\R$,  $A,B\in\Bbb Z$, $B>A$, we have
 $$
 f(B)+\int_A^{B}f(x)\,dx\le \sum_{n=A}^Bf(n)\le f(A)+\int_{A}^{B}f(x)\,dx
$$
which, in turn, is an immediate corollary of the estimate
$$
f(n+1)\le\int_n^{n+1}f(x)\,dx\le f(n).
$$
\begin{lemma}\label{Lem1.int} Let the continuous function $f:\R^3\setminus\{0\}\to\R_+$ be such that
\begin{equation}\label{1.good}
C_2\max_{x\in Q_{i,j,k}} f(x)\le \min_{x\in Q_{i,j,k}}f(x)\le C_1\max_{x\in Q_{i.j,k}} f(x),
\end{equation}
$(i,j,k)\in\Bbb Z^3$ and  the constants $C_1$ and $C_2$ are positive and are independent of $Q_{i,j,k}\niton 0$.
Let also $\Omega\subset\R^3$ be a domain which does not contain $0$ and
\begin{equation}\label{1.lat}
\Omega_{lat}:=\{(i,j,k)\in\Bbb Z^3:\,\exists Q_{I,J,K}\subset\Omega,\ \ (i,j,k)\in Q_{I,J,K},\ 0\notin Q_{I,J,K}\}.
\end{equation}
Then,
\begin{equation}\label{1.comp}
\sum_{(i,j,k)\in\Omega_{lat}}f(i,j,k)\le C\int_\Omega f(x)\,dx,
\end{equation}
where the constant $C$ is independent of $\Omega$ and $f$. If assumption \eqref{1.good}
is satisfied for all $(I,J,K)$, the condition $0\notin\Omega$ and $0\notin Q_{I,J,K}$ can be removed.
\end{lemma}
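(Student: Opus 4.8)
The plan is to reduce the three-dimensional sum to an iterated application of the one-dimensional integral comparison principle recalled just before the statement. The key observation is that condition \eqref{1.good} says the function $f$ varies by at most a bounded multiplicative factor across any unit cube $Q_{i,j,k}$, so summing $f$ over lattice points is comparable to integrating $f$ over the corresponding cubes, up to the constants $C_1,C_2$. First I would note that for each lattice point $(i,j,k)\in\Omega_{lat}$, by definition there is a cube $Q_{I,J,K}\subset\Omega$ containing it with $0\notin Q_{I,J,K}$; on this cube \eqref{1.good} gives
$$
f(i,j,k)\le \max_{x\in Q_{I,J,K}}f(x)\le C_2^{-1}\min_{x\in Q_{I,J,K}}f(x)\le C_2^{-1}\int_{Q_{I,J,K}}f(x)\,dx.
$$
This is the heart of the matter: it converts a point evaluation into an integral over a unit cube lying inside $\Omega$.

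The next step is a combinatorial bookkeeping argument. Each lattice point $(i,j,k)$ is a corner of at most eight unit cubes, so the assignment of a witnessing cube $Q_{I,J,K}$ to each point $(i,j,k)\in\Omega_{lat}$ covers each cube at most a bounded number of times when we sum the above inequality over all points. More precisely, I would fix for each point one such witnessing cube and observe that a given cube $Q_{I,J,K}\subset\Omega$ can serve as the witness for at most its eight corner points. Summing the cube-wise bound over all of $\Omega_{lat}$ therefore yields
$$
\sum_{(i,j,k)\in\Omega_{lat}}f(i,j,k)\le C_2^{-1}\sum_{Q_{I,J,K}\subset\Omega}(\text{multiplicity})\int_{Q_{I,J,K}}f(x)\,dx\le 8\,C_2^{-1}\int_\Omega f(x)\,dx,
$$
since the cubes $Q_{I,J,K}\subset\Omega$ are disjoint (up to boundaries of measure zero) and contained in $\Omega$. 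This gives \eqref{1.comp} with $C=8C_2^{-1}$.

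For the final sentence of the statement, if \eqref{1.good} holds for \emph{all} cubes including those containing the origin, then $f$ is controlled near $0$ as well and there is no need to excise the singular cube; the same covering argument applies verbatim without the restrictions $0\notin\Omega$ and $0\notin Q_{I,J,K}$, so these conditions may be dropped. The main obstacle I anticipate is not analytic but organizational: one must set up the witness-cube assignment carefully so that the multiplicity counting is airtight and so that the chosen cubes genuinely lie in $\Omega$ (so their integrals sum to at most $\int_\Omega f$). The definition \eqref{1.lat} of $\Omega_{lat}$ is tailored precisely to guarantee the existence of such an interior witnessing cube for every summed point, which is what makes the estimate clean; the role of condition \eqref{1.good} is solely to absorb the oscillation of $f$ within each cube into the constant $C$.
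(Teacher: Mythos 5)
Your proof is correct and follows essentially the same route as the paper's: condition \eqref{1.good} turns each point value $f(i,j,k)$ into an integral over a witnessing unit cube inside $\Omega$, and the multiplicity-$8$ bookkeeping (a unit cube has only eight lattice corners, and distinct cubes are essentially disjoint and contained in $\Omega$) yields \eqref{1.comp}. The only cosmetic difference is in the constants: the paper records $C=8C_1$, while your $C=8C_2^{-1}$ follows more literally from the first inequality in \eqref{1.good}; the substance of the argument is identical.
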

\begin{proof} Indeed,  assumption \eqref{1.good} guarantees that
\begin{equation}\label{1.mult}
C_2\int_{Q_{I,J,K}}f(x)\,dx\le f(i,j,k)\le C_1\int_{Q_{I,J,K}}f(x)\,dx
\end{equation}
for all $Q_{I,J,K}$ which do not contain zero and all $(i,j,k)\in Q_{I,J,K}\cap\Bbb Z^3$. Since any point $(i,j,k)\in\Bbb Z$ can belong no more than $8$ different cubes $Q_{I,J,K}$, \eqref{1.mult} implies \eqref{1.comp} (with the constant $C=8C_1$) and finishes the proof of the lemma.
\end{proof}
We will mainly use this lemma for functions $f_{a,s}(x)$ defined by \eqref{1.pol}. It is not difficult to see that these functions satisfy assumption \eqref{1.good}. For instance, this follows from the obvious estimate
$$
|\nabla f_{a,s}(x)|\le \frac{C_{s}}{\sqrt{a^2+|x|^2}}f_{a,s}(x)
$$
and the mean value theorem. Moreover, if $a\ne0$, condition \eqref{1.good} holds for $Q_{i,j,k}\owns 0$ as well. As a corollary, we get the following estimate for summation "by spheres":
\begin{multline}\label{1.as}
\sideset{}{'}\sum_{(i,j,k)\in B_n\cap\Bbb Z^3} f_{a,s}(i,j,k)\le C_s\int_{x\in B_n\setminus B_1}(a^2+x^2)^{s}\,dx\le\\\le 4\pi C_s\int_1^{\sqrt n} R^2(a^2+R^2)^s\,dR\le 4\pi C_s\int_1^{\sqrt n} R(a^2+R^2)^{s-1/2}\,dR=\\=\frac {4\pi C_s}{2s+3}\((a^2+n)^{s+3/2}-(a^2+1)^{s+3/2}\),
\end{multline}
where $B_n:=\{x\in\R^3\,:\,|x|^2\le n\}$ and $\sum'$ means that $(i,j,k)=0$ is ex\-clu\-ded. Of course, in the case $s=-\frac32$, the RHS of \eqref{1.as} reads as $2\pi C_s\ln\frac{a^2+n^2}{a^2+1}$. In particular, if $s>\frac32$, passing to the limit $n\to\infty$ in \eqref{1.as}, we see that
\begin{equation}\label{1.simple}
\sideset{}{'}\sum_{(i,j,k)\in\Bbb Z^3}\frac1{(a^2+i^2+j^2+k^2)^s}=
\sideset{}{'}\sum_{(i,j,k)\in\Bbb Z^3}f_{a,-s}(i,j,k)\le \frac {C_s}{(a^2+1)^{s-\frac32}}.
\end{equation}
 Thus, the series in the LHS is absolutely convergent if $s>\frac32$ and its sum tends to zero as $a\to\infty$. It is also well-known that condition $s>\frac32$ is sharp and the series is
 divergent if $s\le \frac32$.
\par
We also mention that Lemmas \ref{Lem1.block} and \ref{Lem1.int} are stated for 3-dimensional case just for simplicity. Obviously, their analogues hold for any dimension. We will use this observation later.
\par
We now turn to the alternating version of lattice sums \eqref{1.simple}
\begin{equation}\label{1.main}
M_{a,s}:=\sideset{}{'}\sum_{(i,j,k)\in\Bbb Z^3}\frac{(-1)^{i+j+k}}{(a^2+i^2+j^2+k^2)^s}
\end{equation}
which is the main object of study in these notes. We recall that, due to \eqref{1.simple}, this series is absolutely convergent for $s>\frac32$, so the sum is independent of the method of summation. In contrast to this, in the case $0<s\le\frac32$, the convergence is not absolute and depends strongly to the method of summation, see \cite{Bor13} and references therein for more details. Note also that $M_{a,s}$ is analytic in $s$ and, similarly to the classical Riemann zeta function, can be extended to a holomorphic function on $\Bbb C$ with a pole at $s=0$, but this is beyond the scope of our paper, see e.g. \cite{Bor13} for more details. Thus, we are assuming from now on that $0<s\le\frac32$. We start with the most studied case of summation by expanding rectangles/parallelograms.
\begin{definition} Let $\Pi_{I,J,K}:=[-I,I]\times[-J,J]\times[-K,K]$, $I,J,K\in\Bbb N$, and
$$
S_{\Pi_{I,J,K}}(a,s):=\sideset{}{'}\sum_{(i,j,k)\in\Pi_{I,J,K}\cap\Bbb Z^3}\frac{(-1)^{i+j+k}}{(a^2+i^2+j^2+k^2)^s}.
$$
We say that \eqref{1.main} is summable by expanding rectangles if the following triple limit exists and finite
$$
M_{a,s}=\lim_{(I,J,K)\to\infty} S_{\Pi_{I,J,K}}(a,s).
$$
\end{definition}
To study the sum \eqref{1.main}, we combine the terms belonging to cubes $Q_{2i,2j,2k}$ and introduce the partial sums
\begin{equation}
E_{\Pi_{I,J,K}}(a,s):=\sideset{}{'}\sum_{(2i,2j,2k)\in\Pi_{I,J,K}\cap2\Bbb Z^3}E_{i,j,k}(a.s),
\end{equation}
where $E_{i,j,k}(a,s):=E_{i,j,k}(f_{a,-s})$ is defined in \eqref{1.E}.
\begin{theorem} Let $0<s\le \frac32$. Then,
\begin{equation}\label{1.equiv}
\bigg|S_{\Pi_{I,J,K}}(a,s)-E_{\Pi_{I,J,K}}(a,s)\bigg|\le \frac {C_s}{\(a^2+\min\{I^2,J^2,K^2\}\)^{s}},
\end{equation}
where the constant $C_s$ is independent of $a$ and $I,J,K$.
\end{theorem}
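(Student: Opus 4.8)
The plan is to observe that $S_{\Pi_{I,J,K}}(a,s)$ and $E_{\Pi_{I,J,K}}(a,s)$ are sums of one and the same summand $g(i,j,k):=(-1)^{i+j+k}f_{a,-s}(i,j,k)$ over two lattice boxes that coincide except for a single boundary layer in each coordinate direction. Indeed, $S_{\Pi_{I,J,K}}$ runs over all lattice points of $\Pi_{I,J,K}$, whereas $E_{\Pi_{I,J,K}}$ runs over the union $U$ of the complete blocks $Q_{2i,2j,2k}$ whose base corner $(2i,2j,2k)$ lies in $\Pi_{I,J,K}$. In each coordinate the range covered by the blocks either overshoots $\{-I,\dots,I\}$ by the single layer $\{I+1\}$ (when $I$ is even) or falls short by the single layer $\{-I\}$ (when $I$ is odd), and likewise for $J,K$; in all cases $U$ and $\Pi_{I,J,K}$ differ by at most one layer per coordinate. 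Since the excluded origin lies deep in the interior of both boxes (assuming $I,J,K\ge1$), it cancels out and contributes nothing, so $S_{\Pi_{I,J,K}}-E_{\Pi_{I,J,K}}$ is a signed sum of $g$ over the symmetric difference $U\triangle\Pi_{I,J,K}$.

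Next I would telescope over the three coordinates, replacing one coordinate range at a time, so as to write this difference as a sum of (at most) three contributions, each supported on a single two-dimensional boundary slab on which one coordinate, say $x_1$, is frozen at a value $x_1^*$ with $|x_1^*|\ge I$ (and correspondingly for the slabs normal to the $x_2$- and $x_3$-axes at distance $\ge J$ and $\ge K$). These slabs avoid the origin, so $f_{a,-s}$ is smooth on them even when $a=0$. The crucial point is that on such a slab the sum over the two transverse coordinates is still an \emph{alternating} two-dimensional lattice sum, and it must not be bounded by absolute values: already $\sum_{j,k}(a^2+I^2+j^2+k^2)^{-s}$ diverges for $s\le1$, which is exactly the relevant range here.

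Instead, on each slab I would invoke the two-dimensional analogues of Lemma \ref{Lem1.block} and Lemma \ref{Lem1.int} (valid in any dimension, as already noted). Grouping the transverse sum into $2\times2$ blocks, the two-dimensional block identity expresses each block contribution as the integral of $\partial_{x_2}\partial_{x_3}f_{a,-s}$ over the block; since
$$
|\partial_{x_2}\partial_{x_3}f_{a,-s}(x)|=4s(s+1)|x_2x_3|(a^2+|x|^2)^{-s-2}\le C_s(a^2+|x|^2)^{-s-1},
$$
the integral comparison principle bounds the total transverse contribution of the slab by
$$
C_s\int_{\R^2}(a^2+(x_1^*)^2+x_2^2+x_3^2)^{-s-1}\,dx_2\,dx_3=\frac{\pi C_s}{s}\,(a^2+(x_1^*)^2)^{-s}\le C_s(a^2+I^2)^{-s},
$$
the integral converging precisely because $s>0$. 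The one-dimensional edge and zero-dimensional corner corrections coming from the incompleteness of the boundary blocks are handled identically, via the one-dimensional finite-difference bound on $\partial_{x_3}f_{a,-s}$, and are of the same order $(a^2+I^2)^{-s}$ or smaller. Summing the three slab estimates and using $(a^2+I^2)^{-s},(a^2+J^2)^{-s},(a^2+K^2)^{-s}\le(a^2+\min\{I^2,J^2,K^2\})^{-s}$ then yields \eqref{1.equiv}.

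The main obstacle is exactly the one flagged above: the boundary discrepancy carries two-dimensional alternating sums whose naive absolute bound diverges throughout the range $0<s\le\frac32$, so the cancellation has to be preserved by reducing to lower-dimensional block and finite-difference estimates rather than discarded. Once this reduction is set up, the remaining work—the parity bookkeeping of which layer is added or removed and the estimation of the accompanying edge and corner terms—is routine.
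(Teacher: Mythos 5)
Your proposal is correct and takes essentially the same route as the paper: the paper likewise identifies $S_{\Pi_{I,J,K}}-E_{\Pi_{I,J,K}}$ with alternating face/edge/corner sums on the boundary of the box and controls the two-dimensional face sums by the lower-dimensional analogues of Lemmas \ref{Lem1.block} and \ref{Lem1.int}, organizing this as an induction on dimension (the face sum at $x_1=2I$ being a 2D sum with effective parameter $\sqrt{a^2+4I^2}$), which your argument simply unrolls explicitly via $2\times2$ blocks on the faces and paired differences on the edges. The key point you flag -- that the face sums must be estimated with their cancellation preserved, since absolute bounds diverge for $s\le 1$ -- is exactly the mechanism of the paper's induction step, and your mixed-derivative and integral-comparison estimates match those used there.
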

\begin{proof} We first mention that, according to Lemma \ref{Lem1.block} and estimate \eqref{1.simple}, we see that
\begin{equation}\label{1.e-conv}
|E_{\Pi_{I,J,K}}(a,s)|\le \frac{C_s}{(a^2+1)^s}
\end{equation}
uniformly with respect to $(I,J,K)$.
\par
The difference between $S_{\Pi_{I,J,K}}$ and $E_{\Pi_{I,J,K}}$ consists of the alternating sum of $f_{a,-s}(i,j,k)$ where $(i,j,k)$ belong to the boundary of $\Pi_{I,J,K}$. Let us write an explicit formula for the case when all $I,J,K$ are even (other cases are considered analogously):
\begin{multline}\label{1.huge}
S_{\Pi_{2I,2J,2K}}(a,s)-E_{\Pi_{2I,2J,2K}}(a,s)=\!\!\!\sideset{}{'}\sum_{\substack{-2J\le j\le-2J\\-2K\le k\le2K}}(-1)^{j+k}f_{a,-s}(2I,j,k)+\\+\sideset{}{'}\sum_{\substack{-2I\le i\le-2I\\-2K\le k\le2K}}(-1)^{i+k}f_{a,-s}(i,2J,k)+\sideset{}{'}\sum_{\substack{-2I\le i\le-2I\\-2J\le j\le2J}}(-1)^{i+j}f_{a,-s}(i,j,2K)-\\-
\sideset{}{'}\sum_{-2I\le i\le-2I}(-1)^{i}f_{a,-s}(i,2J,2K)-\sideset{}{'}\sum_{-2J\le j\le-2J}(-1)^{j}f_{a,-s}(2I,j,2K)-\\-
\sideset{}{'}\sum_{-2K\le k\le-2K}(-1)^{k}f_{a,-s}(2I,2J,k)+f_{a.-s}(2I,2J,2K).
\end{multline}
In the RHS of this formula we see the analogues of lattice sum \eqref{1.main} in lower dimensions one or two and, thus, it allows to reduce the dimension. Indeed, assume that the analogues of estimate \eqref{1.equiv} are already established in one and two dimensions. Then, using the lower dimensional analogue of \eqref{1.e-conv} together with the fact that
$$
f_{a,-s}(2I,j,k)=f_{\sqrt{a^2+4I^2},-s}(i,j),
$$
where we have 2D  analogue of the function $f_{a,-s}$ in the RHS, we arrive at
\begin{multline}\label{1.huge1}
\bigg|S_{\Pi_{2I,2J,2K}}(a,s)-E_{\Pi_{2I,2J,2K}}(a,s)\bigg|\le\\\le
\frac{C_s}{(a^2+4I^2+1)^s}+\frac{C_s}{(a^2+\min\{J^2,K^2\})^s}+
\frac{C_s}{(a^2+4J^2+1)^s}+\\+\frac{C_s}{(a^2+\min\{I^2,K^2\})^s}+
\frac{C_s}{(a^2+4K^2+1)^s}+\frac{C_s}{(a^2+\min\{I^2,J^2\})^s}+\\+
+\frac{C_s}{(a^2+4I^2+4K^2\})^s}+\frac{C_s}{(a^2+4I^2+4J^2\})^s}+
\frac{C_s}{(a^2+4J^2+4K^2\})^s}+\\+
\frac{C_s}{(a^2+4I^2+4J^2+4K^2\})^s}\le \frac{C_s'}{(a^2+\min\{I^2,J^2,K^2\})^s}.
\end{multline}
Since in 1D case the desired estimate is obvious, we complete the proof of the theorem by induction.
\end{proof}
\begin{corollary}\label{Cor1.main} Let $s>0$. Then series \eqref{1.main} is convergent by expanding rectangles and
\begin{equation}\label{1.rep}
M_{a,s}=\sideset{}{'}\sum_{(i,j,k)\in\Bbb Z^3}E_{i,j,k}(a,s).
\end{equation}
In particular, the series in RHS of \eqref{1.rep} is absolutely convergent, so the method of summation for it is not important.
\end{corollary}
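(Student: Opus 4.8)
The plan is to read the statement off from the comparison estimate \eqref{1.equiv}, once we know that the block series $\sideset{}{'}\sum_{(i,j,k)\in\Bbb Z^3}E_{i,j,k}(a,s)$ converges absolutely. For the latter I would apply the bound \eqref{1.bet} to the function $f_{a,-s}$ (that is, replace $s$ by $-s$), which yields
$$
|E_{i,j,k}(a,s)|\le C_s(a^2+i^2+j^2+k^2)^{-s-\frac32}
$$
for every block $(i,j,k)$ not sitting at the origin. Since the exponent satisfies $s+\frac32>\frac32$, estimate \eqref{1.simple} applied with $s+\frac32$ in place of $s$ shows that $\sum_{(i,j,k)\ne0}|E_{i,j,k}(a,s)|<\infty$; the single block containing the origin contributes one finite term when $a\ne0$ and is excluded by the prime when $a=0$. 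This proves the ``in particular'' assertion and, more importantly, guarantees that the partial sums $E_{\Pi_{I,J,K}}(a,s)$ converge to the full sum $\sideset{}{'}\sum_{(i,j,k)}E_{i,j,k}(a,s)$ as $(I,J,K)\to\infty$, independently of the shape of the exhausting rectangles.

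Next I would pass to the limit in \eqref{1.equiv} for the range $0<s\le\frac32$ covered by that theorem. By definition, convergence by expanding rectangles means $I,J,K\to\infty$ simultaneously, hence $\min\{I^2,J^2,K^2\}\to\infty$, and since $s>0$ the right-hand side of \eqref{1.equiv} tends to $0$. Combining this with the convergence $E_{\Pi_{I,J,K}}(a,s)\to\sideset{}{'}\sum_{(i,j,k)}E_{i,j,k}(a,s)$ established above gives
$$
\lim_{(I,J,K)\to\infty}S_{\Pi_{I,J,K}}(a,s)=\sideset{}{'}\sum_{(i,j,k)}E_{i,j,k}(a,s),
$$
which is precisely \eqref{1.rep} together with the summability of \eqref{1.main} by expanding rectangles.

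Finally, the remaining range $s>\frac32$ is not covered by \eqref{1.equiv} but is easier: here \eqref{1.simple} shows that \eqref{1.main} is already absolutely convergent, so it is summable by expanding rectangles automatically, and regrouping its terms into the $2\times2\times2$ blocks $Q_{2i,2j,2k}$ --- a legitimate operation on an absolutely convergent series --- reproduces \eqref{1.rep}. I do not expect a serious obstacle here; the only points requiring genuine care are the bookkeeping of the singular origin term when $a=0$ (handled by the prime, which forces us to invoke \eqref{1.bet} only on blocks away from the origin where $f_{a,-s}$ is smooth) and the insistence that the limit be taken with all of $I,J,K$ large, so that the boundary error in \eqref{1.equiv} truly vanishes and the shape-independence of the limit of an absolutely convergent series may be used.
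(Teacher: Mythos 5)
Your proposal is correct and follows essentially the same route as the paper, whose proof is the one-line observation that the corollary follows from \eqref{1.equiv}, \eqref{1.bet} and \eqref{1.simple}: you have simply filled in exactly those details (absolute convergence of the block sums via \eqref{1.bet} and \eqref{1.simple} with exponent $s+\frac32$, then passing to the limit in \eqref{1.equiv}). Your separate treatment of the range $s>\frac32$, where \eqref{1.equiv} is not stated but absolute convergence of \eqref{1.main} makes the regrouping trivial, is a point the paper leaves implicit, and your handling of it is correct.
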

Indeed, this fact is an immediate corollary of estimates \eqref{1.equiv}, \eqref{1.bet} and \eqref{1.simple}.

\section{Summation by expanding spheres}\label{s2}
We now turn to summation by expanding spheres. In other words, we want to write the formula \eqref{1.main} in the form
\begin{equation}\label{2.sphere}
M_{a,s}=\lim_{N\to\infty}\sideset{}{'}\sum_{i^2+j^2+k^2\le N}\frac{(-1)^{i+j+k}}{(a^2+i^2+j^2+k^2)^s}.
\end{equation}
Moreover, since $(i+j+k)^2=i^2+j^2+k^2+2(ij+jk+ik)$, we have $(-1)^{i+j+k}=(-1)^{i^2+j^2+k^2}$, so formula \eqref{2.sphere} can be rewritten in the following elegant form
\begin{equation}\label{2.sp}
M_{a,s}=\sum_{n=1}^\infty (-1)^n\frac{r_3(n)}{(a^2+n)^s},
\end{equation}
where $r_3(n)$ is the number of integer points on a sphere of radius $\sqrt{n}$ centered at zero, see e.g. \cite{Ram21} and reference therein for more details about this function. However, the convergence of series \eqref{2.sp} is more delicate. In particular, it is well-known that this series is divergent for $s\le\frac12$, see \cite{Emer,Bor13}. For the convenience of the reader, we give the proof of this fact below.

\begin{lemma}\label{Lem2.div} Let $c>0$ be small enough. Then, there are infinitely many values of $n\in\Bbb N$ such that
\begin{equation}\label{2.bad}
r_3(n)\ge c\sqrt{n}
\end{equation}
and, particularly, series \eqref{2.sp} is divergent for all $s\le\frac12$.
\end{lemma}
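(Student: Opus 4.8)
The plan is to prove the lower bound $r_3(n)\ge c\sqrt n$ for infinitely many $n$, which then immediately forces divergence of the series $\sum(-1)^n r_3(n)(a^2+n)^{-s}$ for $s\le\frac12$, since the general term fails to tend to zero along this subsequence.

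\textbf{Counting on average.} First I would not attempt to bound a single $r_3(n)$ from below, since $r_3$ is highly irregular (it even vanishes for $n\equiv 7\pmod 8$). Instead the natural approach is an averaging/pigeonhole argument. The key identity is that the number of lattice points in the ball $B_N$ is
\begin{equation}\label{2.avg}
\sideset{}{'}\sum_{1\le n\le N}r_3(n)=\#\{(i,j,k)\in\Bbb Z^3\setminus\{0\}\,:\,i^2+j^2+k^2\le N\}=\tfrac{4}{3}\pi N^{3/2}+O(N),
\end{equation}
by comparison of the lattice-point count with the volume of the ball of radius $\sqrt N$ (the same volume-comparison principle used in Lemma \ref{Lem1.int}). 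The point is that the cumulative sum grows like $N^{3/2}$, so the terms $r_3(n)$ must on average be of size $\sqrt n$.

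\textbf{From the average to a subsequence.} Suppose, for contradiction, that $r_3(n)< c\sqrt n$ for all but finitely many $n$, with $c$ a small constant to be fixed. Summing this putative bound gives
\begin{equation}\label{2.contra}
\sideset{}{'}\sum_{1\le n\le N}r_3(n)\le C_0+c\sideset{}{'}\sum_{1\le n\le N}\sqrt n\le C_0+c\int_0^N\sqrt x\,dx+O(N)= \tfrac{2}{3}c\,N^{3/2}+O(N).
\end{equation}
Comparing \eqref{2.contra} with the asymptotics \eqref{2.avg}, the leading coefficient on the right is $\frac23 c$, while on the left it is $\frac43\pi$; choosing $c$ small enough that $\frac23 c<\frac43\pi$ (any $c<2\pi$ works, so in particular any small $c$) produces a contradiction for large $N$. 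Hence $r_3(n)\ge c\sqrt n$ must hold for infinitely many $n$, which is \eqref{2.bad}.

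\textbf{Divergence.} With \eqref{2.bad} in hand, for $s\le\frac12$ and along the infinite set of $n$ satisfying it we have
\[
\frac{r_3(n)}{(a^2+n)^s}\ge \frac{c\sqrt n}{(a^2+n)^{1/2}}\to c>0,
\]
so the general term of \eqref{2.sp} does not converge to zero, and the series diverges by the term test. The main obstacle is conceptual rather than technical: one must resist the temptation to control individual values of $r_3(n)$ and instead extract the pointwise lower bound on a subsequence purely from the averaged growth rate via the pigeonhole contradiction above; the volume estimate \eqref{2.avg} itself is routine.
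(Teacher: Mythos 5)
Your proof is correct and uses essentially the same idea as the paper: a volume-comparison count of lattice points showing that $r_3(n)$ is of size $\sqrt n$ on average, followed by an averaging/pigeonhole step and the term test for divergence. The only (cosmetic) difference is that the paper pigeonholes directly on the dyadic shell $N\le i^2+j^2+k^2\le 2N$, which contains $\ge cN^{3/2}$ lattice points spread over only $N+1$ values of $n$, whereas you run the same count on the full ball and phrase the extraction of the subsequence as a proof by contradiction.
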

\begin{proof} Indeed, by comparison of volumes, we see that the number $M_N$ of integer points in a spherical layer $N\le i^2+j^2+k^2\le 2N$ can be estimated from above as
$$
M_N=\sum_{n=N}^{2N}r_3(n)\ge \frac43\pi\((\sqrt{2N}-\sqrt3)^{3}-(\sqrt{N}+\sqrt3)^{3}\)\ge  cN^{3/2}
$$
for sufficiently small $c>0$. Thus, for every sufficiently big $N\in\Bbb N$, there exists $n\in[N,2N]$ such that $r_3(n)\ge c\sqrt{n}$ and estimate \eqref{2.bad} is verified. The divergence of \eqref{2.sp} for $s\le \frac12$ is an immediate corollary of this estimate since the $n$th term $(-1)^n\frac{r_3(n)}{(a^2+n)^s}$ does not tend to zero under this condition and the lemma is proved.
\end{proof}
\begin{remark} The condition that $c>0$ is small can be removed using more sophisticated methods. Moreover, it is known that the inequality
$$
r_3(n)\ge c\sqrt{n}\ln\ln n
$$
holds for infinitely many values of $n\in\Bbb N$ (for properly chosen $c>0$). On the other hand,
for every $\eb>0$, there exists $C_\eb>0$ such that
$$
r_3(n)\le C_\eb n^{\frac12+\eb},
$$
 see  \cite{Ram21} and references therein. Thus, we cannot establish divergence of \eqref{2.sp} via the $n$th term test if $s>\frac12$. Since this series is alternating, one may expect  convergence for $s>\frac12$. However, the behavior of $r_3(n)$ as $n\to\infty$ is very irregular and, to the best of our knowledge, this convergence is still an open problem for $\frac12<s\le\frac{25}{34}$, see \cite{Bor13} for the convergence in the case $s>\frac{25}{34}$ and related results.
\end{remark}
Thus, one should use weaker concepts of convergence in order to justify equality \eqref{2.sp}. The main aim of these notes is to establish the convergence in the sense of Cesaro.
\begin{definition}\label{Def2.Cesaro} Let $\kappa>0$. We say the  series \eqref{2.sp} is $\kappa$-Cesaro (Cesaro-Riesz) summable if the sequence
$$
C^\kappa_N(a,s):=\sum_{n=1}^N\(1-\frac nN\)^\kappa(-1)^n\frac{r_3(n)}{(a^2+n)^s}
$$
is convergent. Then we write
$$
(C,\kappa)-\sum_{N=1}^\infty (-1)^n\frac{r_3(n)}{(a^2+n)^s}:=\lim_{N\to\infty}C_N^\kappa(a,s).
$$
Obviously, $\kappa=0$ corresponds to the usual summation and if a series is $\kappa$-Cesaro summable, then it is also $\kappa_1$-Cesaro summable for any $\kappa_1>\kappa$, see e.g.~\cite{Ha}.
\end{definition}

\subsection{Second order Cesaro summation}\label{s21} The aim of this subsection is to present a very elementary proof of the fact that the series \eqref{2.sp} is second order Cesaro summable. Namely, the following theorem holds.
\begin{theorem}\label{Th2.2c} Let $s>0$. Then the series \eqref{2.sp} is second order Cesaro summable and
\begin{equation}\label{2.2good}
M_{a,s}=(C,2)-\sum_{N=1}^\infty (-1)^n\frac{r_3(n)}{(a^2+n)^s},
\end{equation}
where $M_{a,s}$ is the same as in \eqref{1.main} and \eqref{1.rep}.
\end{theorem}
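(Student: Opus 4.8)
The plan is to exploit the equivalence already established in Corollary~\ref{Cor1.main}, namely that $M_{a,s}=\sideset{}{'}\sum E_{i,j,k}(a,s)$ with the right-hand side absolutely convergent thanks to the block estimate \eqref{1.bet}. The key idea is to rewrite the second-order Cesaro partial sum $C_N^2(a,s)=\sum_{n=1}^N(1-\frac nN)^2(-1)^n\frac{r_3(n)}{(a^2+n)^s}$ not as a sum over spheres but as a sum over the individual lattice points, i.e. $C_N^2(a,s)=\sideset{}{'}\sum_{i^2+j^2+k^2\le N}(1-\frac{i^2+j^2+k^2}{N})^2\frac{(-1)^{i+j+k}}{(a^2+i^2+j^2+k^2)^s}$. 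First I would group these terms into the blocks $Q_{2i,2j,2k}$ exactly as in the passage from $S$ to $E$, writing the weighted block contribution and comparing it with the unweighted block term $E_{i,j,k}(a,s)$.

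The heart of the matter is then to control the difference $C_N^2(a,s)-\sideset{}{'}\sum E_{i,j,k}(a,s)$. I would split this into two pieces. The first is the tail $\sideset{}{'}\sum_{|(i,j,k)|^2>N}E_{i,j,k}(a,s)$ of the absolutely convergent series, which tends to $0$ as $N\to\infty$ by \eqref{1.bet} and the integral comparison of Lemma~\ref{Lem1.int}. The second, and more substantial, piece comes from the Cesaro weight itself: inside the ball of radius $\sqrt N$ the weighted block sum differs from $E_{i,j,k}(a,s)$ by the discrepancy introduced by the factor $w(x)=(1-\frac{|x|^2}{N})^2$ varying across the eight corners of each block. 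Here I would write the weighted block term as a single triple integral of $-\partial_{x_1}\partial_{x_2}\partial_{x_3}\big(w(x)f_{a,-s}(x)\big)$ over the unit cube (the exact analogue of Lemma~\ref{Lem1.block}), so that the error is governed by the third mixed derivative of the product $w f_{a,-s}$.

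Expanding that derivative by the Leibniz rule produces the ``pure'' term $w\cdot\partial_{x_1}\partial_{x_2}\partial_{x_3}f_{a,-s}$, which reconstitutes the weighted version of the absolutely convergent series, plus cross terms in which at least one derivative falls on $w$. The crucial observation is that $w$ is a polynomial of degree $2$ in each variable separately, so $\partial_{x_\ell}w$ carries a factor like $x_\ell/N$, and the triple derivative of $w$ vanishes on any term where all three derivatives would have to hit the quadratic $w$ in distinct variables — this is precisely why \emph{second} order is elementary. I would estimate each cross term by volume comparison (Lemma~\ref{Lem1.int}) against an integral of the form $\int_{B_N} \frac{|x|^{j}}{N^{\,j}}(a^2+|x|^2)^{-s-3/2+\cdots}\,dx$ and check that every such integral is $O(N^{-\delta})$ for some $\delta>0$, uniformly in $a$.

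The main obstacle I anticipate is purely bookkeeping: assembling the seven Leibniz cross terms and verifying that each one, after the integral comparison, decays as $N\to\infty$ rather than merely staying bounded. Since $s>0$ can be arbitrarily small, the radial integrals are only borderline convergent and the decay must be extracted entirely from the powers of $1/N$ supplied by the derivatives of the weight $w$; I would therefore want to track the exponents carefully and, if necessary, treat the regime of very small $s$ separately. Once these cross terms are shown to vanish in the limit, dominated convergence applied to the ``pure'' weighted term (whose weights $w\to 1$ pointwise, dominated by the summable bound \eqref{1.bet}) yields $\lim_{N\to\infty}C_N^2(a,s)=\sideset{}{'}\sum E_{i,j,k}(a,s)=M_{a,s}$, which is \eqref{2.2good}.
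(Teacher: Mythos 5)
There is a genuine gap, and it sits exactly where the hypothesis ``second order'' does its work. Your decomposition accounts for only two pieces: the complete blocks inside the ball and the tail of the absolutely convergent series \eqref{1.rep} outside it. But $B_N=\{|x|^2\le N\}$ is not a union of cubes $Q_{2I,2J,2K}$: there is a layer of blocks straddling the sphere $|x|^2=N$, and for the lattice points of $C_N^2(a,s)$ lying in such blocks the triple-integral representation of Lemma \ref{Lem1.block} is unavailable, since some of the eight corners of the block are missing from the sum. These points enter with no sign cancellation at all and must be estimated by brute force. This is precisely the term $R_N(a,s)$ occupying the first half of the paper's proof: every such point satisfies $\sqrt N-\sqrt3\le|x|\le\sqrt N$, hence carries a weight $\bigl(1-|x|^2/N\bigr)^2\le 12/N$; by volume comparison there are at most $O(N)$ such points, each of modulus at most $\bigl(a^2+(\sqrt N-\sqrt3)^2\bigr)^{-s}$, so the layer contributes $O(N^{-s})\to0$. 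The exponent $2$ is exactly what makes this weight $O(1/N)$; with the first-order weight the same layer is only $O(N^{1/2-s})$, which is why the paper remarks at the start of \S\ref{s22} that the elementary method forces the extra assumption $s>\frac12$ for $\kappa=1$. Until you add this estimate, the proof is incomplete --- and incomplete at the one step that actually uses the hypothesis of the theorem.

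This also shows that your stated reason why second order is elementary is a misdiagnosis. The vanishing of $\partial_{x_1}\partial_{x_2}\partial_{x_3}w$ is not what distinguishes $\kappa=2$ from $\kappa=1$: for the first-order weight $w_1(x)=1-|x|^2/N$ every \emph{mixed} partial of order at least two vanishes as well, so your interior Leibniz analysis would run even more smoothly in that case, with all cross terms still tending to zero. If your argument were complete as written, it would therefore give an elementary proof of Theorem \ref{Th2.1c} for all $s>0$ --- something the paper only achieves via Stein's theorem on Riemann localization. The obstruction was never the interior cross terms; it is the boundary layer. For the record, the interior part of your argument, which is correct (your cross-term bounds and the dominated-convergence passage to $\sum' E_{i,j,k}(a,s)=M_{a,s}$ all check out), does take a genuinely different route from the paper: the paper never differentiates the weight, but instead expands $\bigl(1-|x|^2/N\bigr)^2$ as a polynomial in $a^2+|x|^2$, so that the interior sum becomes an \emph{exact} linear combination of unweighted block sums with exponents $s$, $s-1$, $s-2$ (formula \eqref{2.huge2}), each controlled by \eqref{1.bet} and \eqref{1.as}. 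Your Leibniz-plus-dominated-convergence treatment is an acceptable substitute for that step, but only for that step; supplement it with the boundary-layer estimate above and the proof closes.
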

\begin{proof}
  For every $N\in\Bbb N$, let us introduce the sets
\begin{equation*}
D_N:=\bigcup\limits_{\substack{(I,J,K)\in2\Bbb Z^3\\Q_{I,J,K}\subset B_N}}Q_{I,J,K},\ \ \ D_N':=B_N\setminus D_N
\end{equation*}
and split the sum $C^2_N(a,s)$ as follows
\begin{multline}
C^2_N(a,s)=\sideset{}{'}\sum_{(i,j,k)\in B_N\cap\Bbb Z^3}\(1-\frac{i^2+j^2+k^2}{N}\)^2\frac{(-1)^{i+j+k}}{(a^2+i^2+j^2+k^2)^s}=\\=\sideset{}{'}\sum_{(i,j,k)\in D_N\cap\Bbb Z^3}\(1-\frac{i^2+j^2+k^2}{N}\)^2\frac{(-1)^{i+j+k}}{(a^2+i^2+j^2+k^2)^s}+\\+
\!\!\!\sideset{}{'}\sum_{(i,j,k)\in D_N'\cap\Bbb Z^3}\!\!\!\(1\!-\!\frac{i^2+j^2+k^2}{N}\)^2\!\!\!\frac{(-1)^{i+j+k}}{(a^2+i^2+j^2+k^2)^s}\!:=
A_N(a,s)+R_N(a,s).
\end{multline}
Let us start with estimating the sum $R_N(a,s)$. To this end we use the elementary fact that
$$
\sqrt{N}-\sqrt3\le \sqrt{i^2+j^2+k^2}\le \sqrt{N}
$$
for all $(i,j,k)\in D_N'$ ($\sqrt{3}$ is the length of the diagonal of the cube $Q_{I,J,K}$). Therefore,
\begin{equation}\label{2.R}
|R_N(a,s)|\le \(1-\frac{(\sqrt{N}-\sqrt3)^2}{N}\)^2\frac{\#\(D'_M\cap\Bbb Z^3\)}{\(a^2+(\sqrt {N}-\sqrt3)^2\)^s}.
\end{equation}
Using again the fact that all integer points of $D'_N$ belongs to the spherical layer
 $\sqrt{N}-\sqrt{3}\le |x|^2\le\sqrt{N}$ together with the volume comparison arguments, we conclude
  that
$$
\#\(D'_M\cap\Bbb Z^3\)\le \frac43\pi\((\sqrt{N}+\sqrt3)^2-(\sqrt{N}-\sqrt3)^2\)\le c_0 N
$$
for some positive $c_0$. Therefore,
\begin{equation}
|R_N(a,s)|\le \frac{C}{N}\frac{c_0N}{\(a^2+(\sqrt {N}-\sqrt3)^2\)^s}=\frac{C}{\(a^2+(\sqrt {N}-\sqrt3)^2\)^s}\to0
\end{equation}
as $N\to\infty$. Thus, the term $R_N$ is not essential and we only need to estimate the sum $A_N$. To this end, we rewrite it as follows
\begin{multline}\label{2.huge2}
A_N(a,s)=\(1-\frac{a^2}N\)^2\sideset{}{'}\sum_{(i,j,k)\in D_N\cap\Bbb Z^3}\frac{(-1)^{i+j+k}}{(a^2+i^2+j^2+k^2)^s}+\\+\frac2N\(1-\frac {a^2}N\)\sideset{}{'}\sum_{(i,j,k)\in D_N\cap\Bbb Z^3}\frac{(-1)^{i+j+k}}{(a^2+i^2+j^2+k^2)^{s-1}}+\\+
\frac1{N^2}\sideset{}{'}\sum_{(i,j,k)\in D_N\cap\Bbb Z^3}\frac{(-1)^{i+j+k}}{(a^2+i^2+j^2+k^2)^{s-2}}=\\=
\(1-\frac{a^2}N\)^2\sideset{}{'}\sum_{(i,j,k)\in \frac12D_{N}\cap\Bbb Z^3}E_{i,j,k}(a,s)+\\+\frac2N\(1-\frac {a^2}N\)\sideset{}{'}\sum_{(i,j,k)\in \frac12D_{N}\cap\Bbb Z^3}E_{i,j,k}(a,s-1)+\\+
\frac1{N^2}\sideset{}{'}\sum_{(i,j,k)\in \frac12D_{N}\cap\Bbb Z^3}E_{i,j,k}(a,s-2).
\end{multline}
From Corollary \ref{Cor1.main}, we know that the first sum in the RHS of \eqref{2.huge2} converges to $M_{a,s}$ as $N\to\infty$.  Using estimates \eqref{1.bet} and \eqref{1.as}, we also conclude that
\begin{equation}
 \bigg|\sideset{}{'}\sum_{(i,j,k)\in \frac12D_N\cap\Bbb Z^3}E_{i,j,k}(a,s-1)\bigg|\le CN^{1-s}
\end{equation}
and
\begin{equation}
 \bigg|\sideset{}{'}\sum_{(i,j,k)\in \frac12D_N\cap\Bbb Z^3}E_{i,j,k}(a,s-2)\bigg|\le CN^{2-s}.
\end{equation}
Thus, two other terms in the RHS of \eqref{2.huge2} tend to zero as $N\to\infty$ and the theorem is proved.
\end{proof}

\subsection{First order Cesaro summation}\label{s22} We may try to treat this case analogously to the proof of Theorem \ref{Th2.2c}. However, in this case, we will have the multiplier $(1-\frac{(\sqrt{N}-\sqrt3)^2}{N})$ without the extra square and this leads to the extra technical assumption $s>\frac12$. In particular, this method does not allow us to establish the convergence for the case of classical NaCl-Madelung constant ($a=0$, $s=\frac12$). In this subsection, we present an alternative method based on the Riemann localization principle for multiple Fourier series which allows us to remove the technical condition $s>\frac12$. The key idea of our method is to introduce the function
\begin{equation}\label{2.F}
M_{a,s}(x):=\sideset{}{'}\sum_{(n,k,l)\in\Bbb Z^3}\frac{e^{i(nx_1+kx_2+lx_3)}}{(a^2+n^2+k^2+l^2)^s}.
\end{equation}
The series is clearly convergent, say,  in $\Cal D'(\Bbb T^3)$ and defines (up to a constant) a fundamental solution for the fractional Laplacian $(a^2-\Delta)^s$ on a torus $\Bbb T^3$ defined on functions with zero mean. Then, at least formally,
$$
M_{a,s}=M_{a,s}(\pi,\pi,\pi)
$$
and justification of this is related to the convergence problem for multi-dimensional Fourier series.
\par
Let $G_{a,s}(x)$ be the fundamental solution for  $(a^2-\Delta)^s$ in the whole space $\R^3$, i.e.
$$
G_{a,s}(x)=-\frac{1}{2^{\frac12+s}\pi^{\frac32}\Gamma(s)}\frac1{|x|^{3-2s}}\Psi(a|x|),\ \ \Psi(z):=z^{\frac32-s}K_{\frac32-s}(z),
$$
where $K_\nu(z)$ is a modified Bessel function of the second kind and $\Gamma(s)$ is the Euler gamma function, see e.g. \cite{SL,Watson}. In particular, passing to the limit $a\to0$ and using that $\Psi(0)=2^{\frac12-s}\Gamma(\frac32-s)$, we get the fundamental solution for the case $a=0$:
$$
G_{0,s}(x)=-\frac{\Gamma(\frac32-s)}{2^{2s}\pi^{\frac32}\Gamma(s)}\,\frac1{|x|^{3-2s}}.
$$
 Then, as known, the periodization of this function will be the fundamental solution on a torus:
\begin{equation}\label{2.Poisson}
M_{a,s}(x)=C_0+\frac1{(2\pi)^3}\sum_{(n,k,l)\in\Bbb Z^3}G_{a,s}\(x-2\pi(n,k,l)\),
\end{equation}
where the constant $C_0$ is chosen in such a way that $M_{a,s}(x)$ has a zero mean on the torus, see \cite{Flap,Trans} and references therein. Recall that, for $a>0$, the function $G_{a,s}(x)$ decays exponentially as $|x|\to\infty$, so the convergence of \eqref{2.Poisson} is immediate (and identity \eqref{2.Poisson} is nothing more than the Poisson Summation Formula applied to \eqref{2.F}). However, when $a=0$, the convergence of \eqref{2.Poisson} is more delicate since $G_{0,s}(x)\sim |x|^{2s-3}$ and the decay rate is not strong enough to get the absolute convergence. Thus, some regularization should be done and the method of summation also becomes important, see \cite{Bor13,CR,mar2000} and reference therein. Recall also that we need to consider the case $s\le\frac12$ only (since for $s>\frac12$, we have  convergence of the first order Cesaro sums by elementary methods).

\begin{lemma}\label{Lem2.Green} Let $0<s<1$. Then
\begin{multline}\label{2.Poisson0}
M_{0,s}(x)=C_0'+\frac1{(2\pi)^3}
G_{0,s}(x)+\\+\frac1{(2\pi)^3}\sideset{}{'}\sum_{(n,k,l)\in\Bbb Z^3}\bigg(G_{0,s}\(x-2\pi(n,k,l)\)-G_{0,s}(2\pi(n,k,l))\bigg),
\end{multline}
where the convergence is understood in the sense of convergence by expanding rectangles and $C'_0$ is chosen in such a way that the mean value of the expression in the RHS is zero.
\end{lemma}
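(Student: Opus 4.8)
The plan is to deduce the $a=0$ representation from the Poisson summation formula \eqref{2.Poisson}, which for $a>0$ is an identity between \emph{absolutely} convergent series, and then pass to the limit $a\to0^+$. First I would isolate in \eqref{2.Poisson} the term with $(n,k,l)=0$ and rewrite the remaining sum by adding and subtracting the constant $G_{a,s}(2\pi(n,k,l))$ inside each summand:
\begin{multline*}
M_{a,s}(x)=\Big(C_0+\tfrac1{(2\pi)^3}\sideset{}{'}\sum_{(n,k,l)\in\Bbb Z^3}G_{a,s}(2\pi(n,k,l))\Big)+\tfrac1{(2\pi)^3}G_{a,s}(x)+\\+\tfrac1{(2\pi)^3}\sideset{}{'}\sum_{(n,k,l)\in\Bbb Z^3}\big(G_{a,s}(x-2\pi(n,k,l))-G_{a,s}(2\pi(n,k,l))\big).
\end{multline*}
For $a>0$ every sum here converges absolutely because $G_{a,s}$ decays exponentially, and the bracketed quantity is an $x$-independent constant which I rename $C_0'(a)$. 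The statement for $a=0$ will follow once I show that each of the three pieces passes to the limit $a\to0^+$, with the regularized sum converging in the sense of expanding rectangles.

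The key step is to control the regularized sum uniformly in $a\in[0,1]$ and to establish its convergence by rectangles at $a=0$. For this I would Taylor expand each summand in $x$ about the origin: since $G_{a,s}$ is radial (hence even), writing $y=2\pi(n,k,l)$ one has
$$
G_{a,s}(x-y)-G_{a,s}(y)=-\nabla G_{a,s}(y)\cdot x+\tfrac12\,x^{\top}D^2G_{a,s}(\eta)\,x
$$
for some $\eta$ on the segment joining $-y$ and $x-y$. The first-order term is odd in $y$, so summing over any rectangle $\Pi_{I,J,K}$ that is symmetric under $y\mapsto -y$ makes these contributions cancel pairwise and drop out exactly; this is precisely why convergence must be understood by expanding rectangles. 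The remainder is governed by the pointwise bound $|D^2G_{a,s}(y)|\le C|y|^{2s-5}$, valid uniformly for $a\in[0,1]$ and $|y|\ge 2|x|$ (this uses only that $\Psi$ and its first two derivatives are bounded on $[0,\infty)$). Since $2s-5<-3$ precisely when $s<1$, the series $\sum_{y\in\Bbb Z^3}|y|^{2s-5}$ converges, so the remainder sum converges absolutely and uniformly in $a\in[0,1]$. This is exactly where the hypothesis $0<s<1$ enters, and it is the main technical obstacle of the lemma.

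With these bounds in hand I would conclude as follows. The absolute, $a$-uniform convergence of the remainder permits interchanging the limit $a\to0^+$ with the summation, and the pointwise limits $G_{a,s}(x-y)\to G_{0,s}(x-y)$ and $G_{a,s}(y)\to G_{0,s}(y)$ -- which follow from $\Psi(0)=2^{\frac12-s}\Gamma(\frac32-s)$ as recorded just before the lemma -- yield the $a=0$ regularized sum; likewise $G_{a,s}(x)\to G_{0,s}(x)$. On the left-hand side, $M_{a,s}(x)\to M_{0,s}(x)$ in $\Cal D'(\Bbb T^3)$ (testing against $\varphi$ gives $\sideset{}{'}\sum_k (a^2+|k|^2)^{-s}\widehat\varphi(-k)$, dominated by the rapidly decaying $\widehat\varphi$). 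Since the left-hand side and every $x$-dependent term on the right converge, the constant $C_0'(a)$ must converge to a finite limit $C_0'$, which gives \eqref{2.Poisson0}. Finally I would fix $C_0'$ by imposing zero mean of the right-hand side over $\Bbb T^3$, matching the normalization of $M_{0,s}(x)$ (whose mean vanishes by definition, the term $(n,k,l)=0$ being excluded). As a sanity check one may note that both sides are, up to a constant, fundamental solutions of $(-\Delta)^s$ on the torus acting on mean-zero functions, so they necessarily coincide modulo an additive constant.
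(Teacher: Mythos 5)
Your proposal is correct, and its core mechanism is the same as the paper's: group lattice points symmetrically about the origin so that the first-order (odd) part of $G$ cancels, then control the remainder by the Hessian decay $|D^2G_{0,s}(y)|\le C|y|^{2s-5}$, which is summable over $\Bbb Z^3$ precisely when $s<1$, and finally use the symmetry of the rectangles $\Pi_{I,J,K}$ to convert this grouped absolute convergence into convergence by expanding rectangles. The paper implements the cancellation by combining the eight nodes $(\pm n,\pm k,\pm l)$ and invoking a mean-value theorem for second differences, while you pair only $y$ with $-y$ and use oddness of $\nabla G_{a,s}$; since $G$ is even, your pairing is exactly the second difference $G(y-x)+G(y+x)-2G(y)$, so the two devices are equivalent. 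Where you genuinely go beyond the paper is the identification of the regularized sum with $M_{0,s}(x)$: the paper dismisses this step as standard (``e.g.\ passing to the limit $a\to0$ in \eqref{2.Poisson}''), whereas you actually execute that limit, with uniform-in-$a$ bounds justifying the interchange of the limit with the summation, distributional convergence $M_{a,s}\to M_{0,s}$, convergence of the constants $C_0'(a)$, and the zero-mean normalization. That is a worthwhile completion of the sketch.

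One justification needs repair, although the inequality it supports is true. You claim $|D^2G_{a,s}(y)|\le C|y|^{2s-5}$ uniformly in $a\in[0,1]$ ``using only that $\Psi$ and its first two derivatives are bounded on $[0,\infty)$''. Boundedness alone is not sufficient: differentiating $|y|^{2s-3}\Psi(a|y|)$ twice produces the cross terms $a|y|^{2s-4}\Psi'(a|y|)$ and $a^2|y|^{2s-3}\Psi''(a|y|)$, and with only $|\Psi'|,|\Psi''|\le C$ and $a\le1$ these are $O(|y|^{2s-4})$ and $O(|y|^{2s-3})$, which are not summable in the whole range $s<1$. What you actually need is that $z\Psi'(z)$ and $z^2\Psi''(z)$ are bounded on $(0,\infty)$, so that the cross terms can be rewritten as $|y|^{2s-5}\,(a|y|)\Psi'(a|y|)$ and $|y|^{2s-5}\,(a|y|)^2\Psi''(a|y|)$. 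This boundedness does hold, by Bessel asymptotics: $K_\nu$ decays exponentially at infinity, while near $z=0$ one has $\Psi'(z)=-z^{3/2-s}K_{1/2-s}(z)=O\bigl(z^{\min(1,\,2-2s)}\bigr)$ and $z^2\Psi''(z)=O\bigl(z^{\min(2,\,3-2s)}\bigr)$. Note in particular that $\Psi''$ itself is unbounded near $0$ when $\tfrac12<s<1$, so your parenthetical is also literally false in part of the range. With the justification corrected in this way, your proof is complete.
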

\begin{proof}[Sketch of the proof] Although this result seems well-known, we sketch  below the proof of convergence of the RHS (the equality with the LHS can be established after that in a standard way, e.g. passing to the limit $a\to0$ in \eqref{2.Poisson}).
\par
To estimate the terms in the RHS, we use the following version of a mean value theorem for second differences:
\begin{multline}
f(p+x)+f(p-x)-2f(p)=[f(p+x)-f(p)]-[f(p)-f(p-x)]\\=x\int_0^1(f'(p+\kappa x)-f'(p-\kappa x))\,d\kappa=\\=
2x^2\int_0^1\int_0^1\kappa_1\kappa f''(p+\kappa(1-2\kappa_1)x)\,d\kappa\,d\kappa_1
\end{multline}
applying this formula to the function $G_{0,s}(x)$, we get
\begin{multline*}
\bigg|\sum_{\eb_i=\pm1,\, i=1,2,3 }\bigg(G_{0,s}(2\pi n+\eb_1x_1,2\pi k+\eb_2 x_2,2\pi l+\eb_3x_3)-G_{0,s}(2\pi(n,k,l))\bigg)\bigg|\\\le
C\sum_{i=1}^3\|\partial^2_{x_i}G_{0,s}\|_{C(2\pi(n,k,l)+\Bbb T^3)}\le \frac{C_1}{(n^2+k^2+l^2)^{\frac32-2(s-1)}}.
\end{multline*}
Thus, we see that, if we combine together in the RHS of \eqref{2.Poisson0} the terms corresponding to 8 nodes $(\pm n,\pm k,\pm l)$ (for every fixed $(n,k,l)$), the obtained series will become absolutely convergent (here we use the assumption $s<1$).
\par
It remains to note that the parallelepipeds $\Pi_{N,M,K}$  enjoy the property: $(n,m,k)\in\Pi_{N,M,K}$ implies that all 8 points $(\pm n,\pm m,\pm k)\in \Pi_{N,M,K}$. This implies the convergence by expanding rectangles and finishes the proof of the lemma.
\end{proof}
\begin{corollary}\label{Cor2.Grsm} Let $0<s<\frac32$ and $a>0$ or $a=0$ and $0<s<1$. Then, the function $M_{a,s}(x)$ is $C^\infty(\Bbb T^3\setminus\{0\})$ and $G_{a,s}(x)\sim \frac C{|x|^{3-2s}}$ near zero. In particular. $M_{a,s}\in L^{1+\eb}(\Bbb T^3)$ for some positive $\eb=\eb(s)$.
\end{corollary}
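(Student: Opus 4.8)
The plan is to read off all three assertions from the Poisson-type representations \eqref{2.Poisson} (for $a>0$) and \eqref{2.Poisson0} (for $a=0$), in which the only singular contribution near the origin is the single term $\frac1{(2\pi)^3}G_{a,s}(x)$, while the remaining periodized sum is smooth near $x=0$. First I would fix a point $x_0\in\Bbb T^3\setminus\{0\}$ and a small ball $B\ni x_0$ whose closure avoids the lattice $2\pi\Bbb Z^3$. On $B$ the term $G_{a,s}(x)$ is real-analytic (it is singular only at $x=0$), so it suffices to show that the periodized tail converges in $C^\infty(B)$. For $a>0$ this is immediate: $G_{a,s}$ and all its derivatives decay exponentially, hence $\sum_\nu\partial_x^\alpha G_{a,s}(x-2\pi\nu)$ converges absolutely and uniformly on $B$ for every multi-index $\alpha$. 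This already yields $M_{a,s}\in C^\infty(\Bbb T^3\setminus\{0\})$ on the whole range $0<s<\frac32$.

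The delicate case is $a=0$, where I would differentiate the regularized series in \eqref{2.Poisson0} term by term. Since the subtracted constants $G_{0,s}(2\pi\nu)$ vanish under any derivative of positive order, for $|\alpha|\ge1$ one is left with $\sum_{\nu\ne0}\partial_x^\alpha G_{0,s}(x-2\pi\nu)$, whereas for $|\alpha|=0$ the full regularized summand survives. The two structural facts to exploit are the homogeneity $\partial_x^\alpha G_{0,s}(y)=O(|y|^{2s-3-|\alpha|})$ and the parity $\partial_x^\alpha G_{0,s}(-y)=(-1)^{|\alpha|}\partial_x^\alpha G_{0,s}(y)$. Grouping the nodes $\nu$ and $-\nu$ and invoking the second-difference mean value identity already used in the proof of Lemma \ref{Lem2.Green} produces the needed cancellation of the leading order: for $|\alpha|=0$ one obtains a genuine symmetric second difference bounded by $C|x|^2|\nu|^{2s-5}$, and for $|\alpha|=1$ the oddness kills the $O(|\nu|^{2s-4})$ contribution and leaves $O(|\nu|^{2s-5})$, while for $|\alpha|\ge2$ the direct bound $O(|\nu|^{2s-3-|\alpha|})\le O(|\nu|^{2s-5})$ already suffices. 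In every case the surviving majorant decays like $|\nu|^{2s-5}$, which is summable over $\Bbb Z^3$ precisely because $2s-5<-3$, i.e.\ because $s<1$. Hence all termwise-differentiated series converge uniformly on $B$, giving $M_{0,s}\in C^\infty(\Bbb T^3\setminus\{0\})$.

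It remains to treat the singularity and integrability. Near $x=0$ both representations give $M_{a,s}(x)=\frac1{(2\pi)^3}G_{a,s}(x)+h(x)$ with $h$ smooth near $0$, so the asymptotics of $M_{a,s}$ are those of $G_{a,s}$. From the explicit formula for $G_{a,s}$ together with $\Psi(0)=2^{\frac12-s}\Gamma(\tfrac32-s)$ one reads off $G_{a,s}(x)\sim-\frac{\Gamma(\frac32-s)}{2^{2s}\pi^{\frac32}\Gamma(s)}\,|x|^{2s-3}$ as $x\to0$, which is the claimed behaviour $\sim C|x|^{-(3-2s)}$. Finally, since $M_{a,s}$ is bounded away from the origin by Steps 1--2, for $L^{1+\eb}$ it is enough to integrate the singular part: in $\R^3$ one has $\int_{|x|\le\rho}|x|^{(2s-3)(1+\eb)}\,dx<\infty$ exactly when $(3-2s)(1+\eb)<3$, that is when $\eb<\frac{2s}{3-2s}$, and any such positive $\eb=\eb(s)$ gives $M_{a,s}\in L^{1+\eb}(\Bbb T^3)$.

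The hard part will be Step 2, namely justifying that the termwise-differentiated regularized series still converges all the way up to $s<1$; this is exactly where the parity-induced cancellation and the second-difference estimate from Lemma \ref{Lem2.Green} are indispensable, whereas for $a>0$ the exponential decay of $G_{a,s}$ renders everything routine.
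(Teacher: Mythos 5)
Your proposal is correct and follows essentially the same route as the paper: both read off the smoothness away from zero and the $|x|^{2s-3}$ singularity from the periodized representations \eqref{2.Poisson} and \eqref{2.Poisson0}, and then deduce $M_{a,s}\in L^{1+\eb}(\Bbb T^3)$ from $(3-2s)(1+\eb)<3$. In fact your Step 2 is more careful than the paper's one-line justification (``differentiation of $G_{a,s}$ can only improve the rate of convergence''): for $a=0$ and $\tfrac12\le s<1$ the naive termwise bound $O(|\nu|^{2s-4})$ on first derivatives is not summable, and the antipodal pairing/parity cancellation you invoke (in the spirit of the second-difference estimate of Lemma \ref{Lem2.Green}) is genuinely needed to close that case.
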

\begin{proof} Indeed, the infinite differentiability follows from \eqref{2.Poisson} and \eqref{2.Poisson0} since differentiation of $G_{a,s}(x)$ in $x$ can only improve the rate of convergence. In addition, $M_{a,s}(x)-\frac1{(2\pi)^3}G_{a,s}(x)$ is smooth on the whole $\Bbb T^3$, so $M_{a,s}$ belongs to the same Lebesgue space $L^p$ as the function $|x|^{2s-3}$.
\end{proof}
\begin{remark} The technical assumption $s<1$ can be removed using the fact that $(-\Delta)^{s_1}(-\Delta)^{s_2}=(-\Delta)^{s_1+s_2}$ and, therefore
$$
G_{a,s_1+s_2}=G_{a,s_1}*G_{a,s_2}
$$
using the elementary properties of convolutions. Note  that the result of Corollary \ref{Cor2.Grsm} can be obtained in a straightforward way using the standard PDEs technique, but we prefer to use the explicit formulas \eqref{2.Poisson} and \eqref{2.Poisson0} which look a bit more transparent. In addition, using the Poisson Summation Formula in a more sophisticated way (e.g. in the spirit of \cite{mar}, see also references therein), we can obtain much better (exponentially convergent) series for $M_{0,s}(x)$.
\end{remark}
We are now ready to state and prove the main result of this section.
\begin{theorem}\label{Th2.1c} Let $s>0$. Then
\begin{equation}\label{2.1cesaro}
M_{a,s}=M_{a,s}(\pi,\pi,\pi)=\lim_{N\to\infty}\sum_{n=1}^n\(1-\frac nN\)\frac{(-1)^nr_3(n)}{(a^2+n)^s}
\end{equation}
and, therefore, \eqref{2.sphere} is  first order Cesaro summable by expanding spheres.
\end{theorem}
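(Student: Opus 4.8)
The plan is to identify the first order Cesaro sum by spheres with a Bochner--Riesz (Riesz) mean of the Fourier series of the function $M_{a,s}(x)$ from \eqref{2.F}, and then to deduce its convergence from the Riemann localization principle at the \emph{critical} summation index. First I would group the lattice points by the value $|\xi|^2=i^2+j^2+k^2=n$ and use $(-1)^{i+j+k}=(-1)^{i^2+j^2+k^2}$ to rewrite the partial sum as
\begin{multline*}
\sum_{n=1}^N\(1-\frac nN\)\frac{(-1)^nr_3(n)}{(a^2+n)^s}\\
=\sideset{}{'}\sum_{|\xi|^2\le N}\(1-\frac{|\xi|^2}N\)\frac{e^{i\l\xi,(\pi,\pi,\pi)\r}}{(a^2+|\xi|^2)^s}.
\end{multline*}
Since the Fourier coefficient of $M_{a,s}(x)$ at $\xi\ne0$ equals $(a^2+|\xi|^2)^{-s}$ and $M_{a,s}$ has zero mean (so the $\xi=0$ mode is absent, matching the exclusion of $n=0$), the right-hand side is exactly the first order Bochner--Riesz (Riesz) mean $S^1_N M_{a,s}(\pi,\pi,\pi)$ associated with the Euclidean balls $\{|\xi|^2\le N\}$. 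Hence the theorem reduces to the two statements $S^1_N M_{a,s}(\pi,\pi,\pi)\to M_{a,s}(\pi,\pi,\pi)$ and $M_{a,s}(\pi,\pi,\pi)=M_{a,s}$.

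The hard part is the convergence of the Riesz means, and this is where the regularity gained in Corollary \ref{Cor2.Grsm} becomes essential. Recalling that it suffices to treat $s\le\frac12$, in dimension $d=3$ the critical Bochner--Riesz index is $(d-1)/2=1$, so these means sit \emph{exactly} at the critical order, where the localization principle is known to fail for general $L^1$ data but to hold for data in $L^p$ with $p>1$ (Stein's localization theorem for multiple Fourier series). Since $M_{a,s}\in L^{1+\eb}(\Bbb T^3)$ by Corollary \ref{Cor2.Grsm} and is $C^\infty$ near $(\pi,\pi,\pi)$ (its only singularity sits at the origin of the torus), I would split $M_{a,s}=\phi+\psi$ with $\phi\in C^\infty(\Bbb T^3)$ coinciding with $M_{a,s}$ near $(\pi,\pi,\pi)$ and $\psi\in L^{1+\eb}(\Bbb T^3)$ vanishing there. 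For the smooth part the Riesz means converge everywhere by absolute convergence of the Fourier series of $\phi$, so $S^1_N\phi(\pi,\pi,\pi)\to\phi(\pi,\pi,\pi)=M_{a,s}(\pi,\pi,\pi)$; for the remainder the critical-index localization gives $S^1_N\psi(\pi,\pi,\pi)\to0$. Adding the two yields $S^1_N M_{a,s}(\pi,\pi,\pi)\to M_{a,s}(\pi,\pi,\pi)$.

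It then remains to identify the value $M_{a,s}(\pi,\pi,\pi)=M_{a,s}$, where $M_{a,s}$ is the sum by expanding rectangles from Corollary \ref{Cor1.main}; this is the more routine part. The Fourier series of $M_{a,s}(x)$ evaluated at $(\pi,\pi,\pi)$ is precisely the numerical series \eqref{1.main}, which by Corollary \ref{Cor1.main} is summable by expanding rectangles to $M_{a,s}$. Comparing this with the product Abel--Poisson means of the same series --- which are regular with respect to summation by rectangles, and which converge to $M_{a,s}(\pi,\pi,\pi)$ because $M_{a,s}$ is continuous at $(\pi,\pi,\pi)$ and the product Poisson kernel is an approximate identity --- gives the claimed identity. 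I expect the decisive obstacle to be the second step: the borderline nature of the critical index $\delta=(d-1)/2$ is exactly what makes the preparatory Corollary \ref{Cor2.Grsm} indispensable, since at this order localization genuinely fails for $L^1$ functions while the $L^{1+\eb}$ regularity is precisely the hypothesis that rescues it.
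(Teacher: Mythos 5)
Your treatment of the core of the theorem is essentially the paper's own argument: you identify the first-order spherical Cesaro sums with the critical-index ($\delta=\frac{d-1}2=1$) Riesz means of the Fourier series \eqref{2.F} at $x=(\pi,\pi,\pi)$, invoke Stein's localization theorem from \cite{stein} (the paper cites it for the class $L\ln_+L\supset L^{1+\eb}$, you for $L^p$ with $p>1$; both forms are available and both are supplied by Corollary \ref{Cor2.Grsm}), and split $M_{a,s}$ into a globally smooth function agreeing with it near $(\pi,\pi,\pi)$ plus an integrable piece vanishing there --- the paper's subtraction of $\frac1{(2\pi)^3}G_{a,s}$ is exactly this device. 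So for the second equality in \eqref{2.1cesaro} your proposal matches the paper.

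Where you genuinely diverge is the identification $M_{a,s}(\pi,\pi,\pi)=M_{a,s}$, and there your argument has two gaps. The paper does this step in one line: first-order Cesaro summability implies second-order Cesaro summability to the same value (consistency of Riesz means, see Definition \ref{Def2.Cesaro} and \cite{Ha}), and Theorem \ref{Th2.2c} already identifies the second-order sum as $M_{a,s}$. Your route via product Abel--Poisson means needs more care. First, ``continuity at the point plus approximate identity'' is not a valid justification in several variables: the product Poisson kernel satisfies $\sup_{|y|>\delta}P_r(y)\sim c_\delta(1-r)^{-1}$ in $d=3$ (take $y$ on a coordinate cross), so it is \emph{not} an approximate identity in the pointwise sense, and there exist $L^1$ functions continuous at a point whose unrestricted Abel means diverge there (a product of three one-dimensional bumps, one centred off the point and two centred at it, gives a counterexample). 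What rescues your claim is the specific structure of $M_{a,s}$: its only singularity sits at the origin, and for $y$ near the origin every coordinate of $(\pi,\pi,\pi)-y$ stays away from $0$, so the kernel there is $O((1-r)^3)$; combined with smoothness elsewhere this yields the convergence, but this argument must actually be made. Second, regularity of Abel--Poisson summation with respect to convergence by expanding rectangles of a multiple series is false in general; it requires uniform boundedness of the rectangular partial sums (otherwise there are simple counterexamples). That boundedness does hold here, by \eqref{1.equiv} together with \eqref{1.e-conv}, but it has to be invoked. Both gaps are repairable with material already in the paper; still, the paper's route through Theorem \ref{Th2.2c} is shorter and bypasses these multi-dimensional summability pitfalls entirely.
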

\begin{proof} As already mentioned above, it is sufficient to consider the case $0<s<1$ only.  We also recall that \eqref{2.F} is nothing more than formal Fourier expansions for the function $M_{a,s}(x)$, therefore, to verify the second equality in \eqref{2.1cesaro}, we need to check the convergence of Fourier expansions of $M_{a,s}(x)$ at $x=(\pi,\pi,\pi)$ by first Cesaro expanding spheres. To do this, we use the analogue of Riemann localization property for multi-dimensional Fourier series. Namely, as proved in \cite{stein}, this localization is satisfied for first order Cesaro summation by expanding spheres in the class of functions $f$ such that
$$
\int_{\Bbb T^3}|f(x)|\ln_+|f(x)|\,dx<\infty
$$
(this is exactly the critical case $\kappa=\frac{d-1}2=1$ for $d=3$).
Thus, since this condition is satisfied for $M_{a,s}(x)$ due to Corollary \ref{Cor2.Grsm}, the Fourier series for $M_{a,s}(x)$ and $M_{a,s}(x)-\frac1{(2\pi)^3}G_{a,s}(x)$ are convergent or divergent simultaneously. Since the second function is $C^\infty$ on the whole torus, we have the desired convergence, see also \cite{MFS} and references therein. Thus, the second equality in \eqref{2.1cesaro} is established.  To verify the first equality, it is enough to mention that the series is second order Cesaro summable to $M_{a,s}$ due to Theorem \ref{Th2.2c}. This finishes the proof of the theorem.
\end{proof}

\section{Concluding remarks}\label{s3}
Note that formally Theorem \ref{Th2.1c} covers Theorem \ref{Th2.2c}. Nevertheless, we would like to present both methods. The one given in subsection \ref{s21} is not only very elementary and transparent, but also can be easily extended to summation by general expanding domains $N\Omega$ where $\Omega$ is a sufficiently regular bounded domain in $\R^3$ containing zero. Also the rate of convergence of second Cesaro sums can be easily controlled. Some numeric simulations for the case of NaCl-Madelung constant ($a=0$, $s=\frac12$) are presented in the figure below
\begin{figure}[h!]
  \centering
    \includegraphics[width=0.8\linewidth]{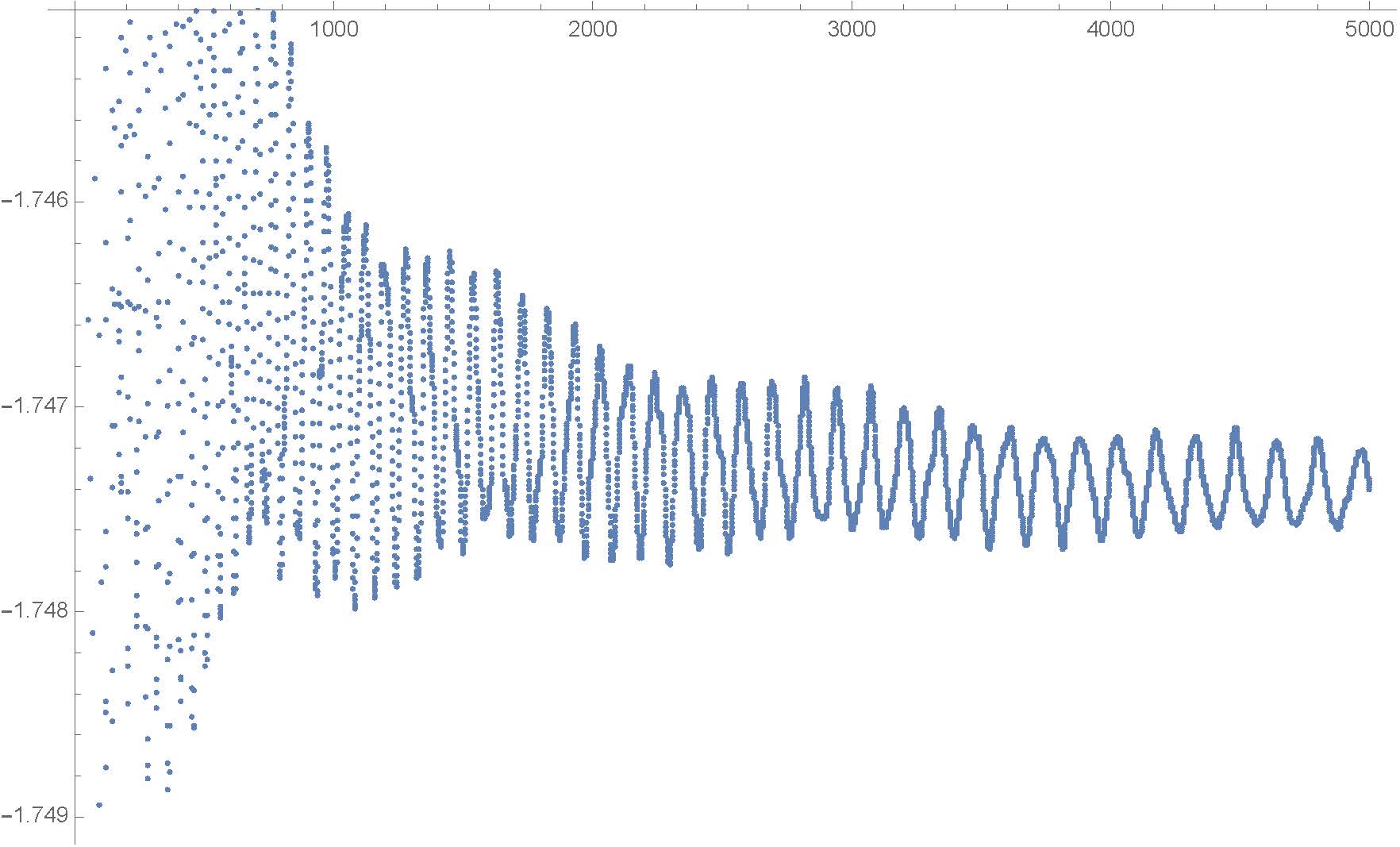}
    \caption{A figure plotting $N$th partial sums of \eqref{2.2good} with $a=0$ and $s=\frac12$ up to N = 5000.}
  \label{fig:coffee}
\end{figure}

\noindent and we clearly see the convergence to the Madelung constant
$$
M_{0,1/2}=-1.74756...
$$
The second method (used in the proof of Theorem \ref{Th2.1c}) is more delicate and strongly based on the Riemann localization for multiple Fourier series and classical results of \cite{stein}.  This method is more restricted to expanding spheres and the rate of convergence is not clear. Some numeric simulation for the NaCl-Madelung constant is presented in the figure below
\begin{figure}[h!]
  \centering
    \includegraphics[width=\linewidth]{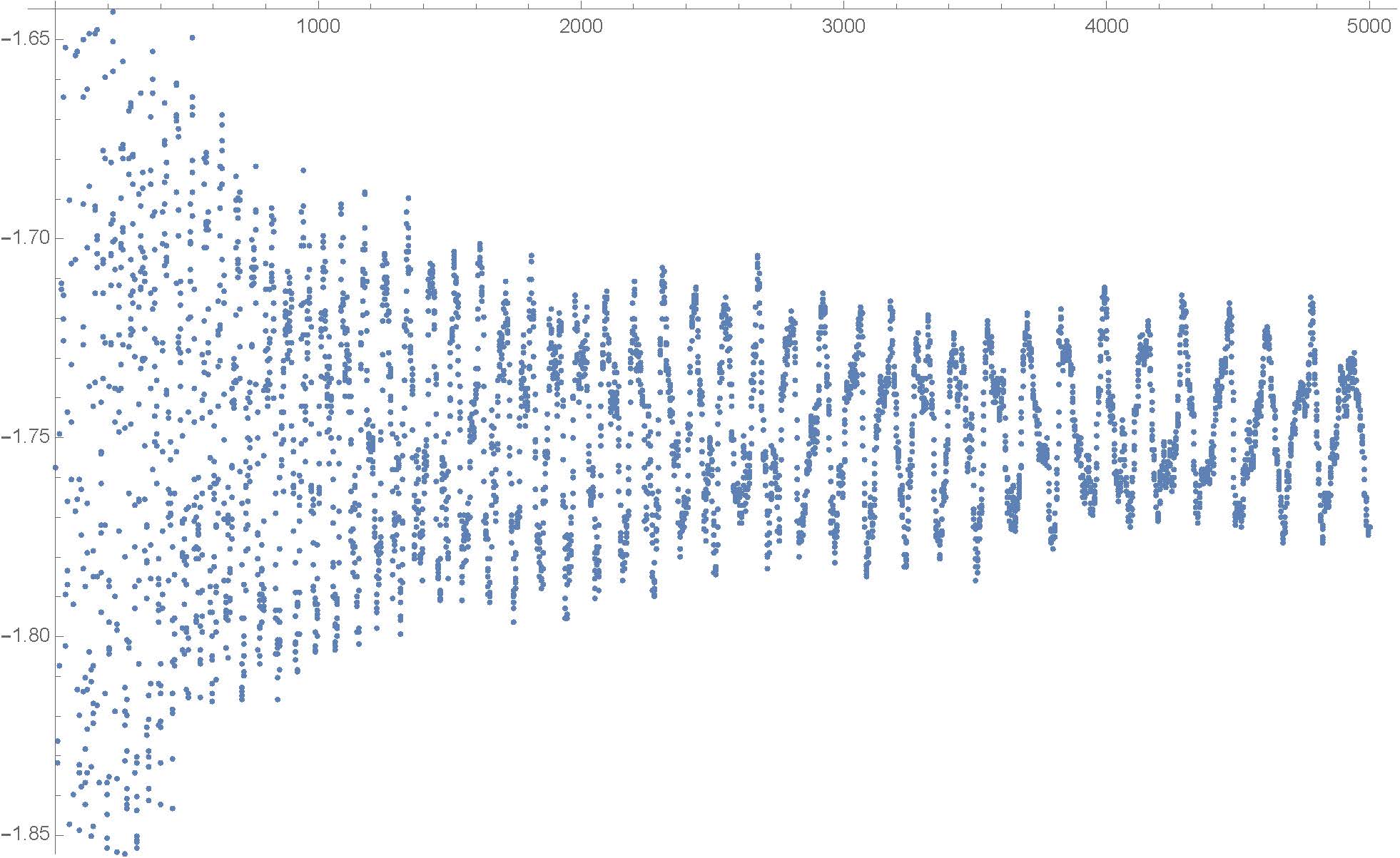}
    \caption{A figure plotting $N$th partial sums of \eqref{2.1cesaro} with $a=0$ and $s=\frac12$ up to N = 5000.}
  \label{fig:coffee1}
\end{figure}

\noindent and we see that the rate of convergence is essentially worse than for the case of second order Cesaro summation. As an advantage of this method, we mention the ability to extend it for more general class of exponential sums of the form \eqref{2.F}.
\par
Both methods are easily extendable to other  dimensions $d\ne3$. Indeed, it is not difficult to see that the elementary method works for Cesaro summation of order $\kappa\ge d-2$ and the second one requires weaker assumption $\kappa\ge\frac{d-1}2$. Using the fact that the function $M_{a,s}(x)$ is more regular (belongs to some Sobolev space $W^{\eb,p}(\Bbb T^3)$), together with the fact that Riemann localization holds for slightly subcritical values of $\kappa$ if this extra regularity is known (see e.g. \cite{MFS}), one can prove  convergence for some $\kappa=\kappa(s)<\frac{d-1}2$ although the sharp values for $\kappa(s)$ seem to be unknown.


\begin{thebibliography}{9}
\bibitem{Flap}
N. Abatangelo and E. Valdinoc, {\it Getting Acquainted with the Fractional Laplacian}, in: Contemporary Research in Elliptic PDEs and Related Topics, Springer, (2019), 1--105.
\bibitem{MFS}
Sh.  Alimov, R.  Ashurov and A.  Pulatov, {\it Multiple Fourier Series and Fourier Integrals}, in:
Commutative Harmonic Analysis IV, Springer, (1992), 1--95.
\bibitem{BDZ}
M. Bartuccelli,
J. Deane  and S. Zelik, {\it Asymptotic expansions and extremals for the critical Sobolev and Gagliardo–Nirenberg inequalities on a torus}, Proc  R. Soc.  Edinburgh, Vol. 143, No. 3, (2013), 445--482.
\bibitem{Bor13}
J. Borwein, M. Glasser, R.  McPhedran, J. Wan, and I. Zucker,  {\it Lattice Sums Then and Now}, (Encyclopedia of Mathematics and its Applications),  Cambridge: Cambridge University Press, 2013.
\bibitem{CR}
A.  Chaba and R. Pathria, {\it
Evaluation of lattice sums using Poisson's summation formula. II},
 J. Phys. A: Math. Gen.. Vol. 9. No. 9, (1976) 1411--1423.
\bibitem{Emer}
O.  Emersleben, {\it  \"Uber die Konvergenz der Reihen Epsteinscher Zetafunktionen}, Math. Nachr.,
Vol. 4, No. 1-6, (1950), 468--480.
\bibitem{SL}
D. Gurarie, {\it Symmetries and Laplacians},
in: Introduction to Harmonic Analysis, Group Representations and Applications, Vol. 174, North-Holland, 1992.
\bibitem{Ha}
G.H. Hardy, {\it Divergent series}, Clarendon Press, 1949.
\bibitem{mar}
S. Marshall, {\it A rapidly convergent modified Green' function for Laplace' equation in a rectangular region,} Proc. R. Soc. Lond. A. vol. 455 (1999), 1739--1766.
\bibitem{mar2000}
S. Marshall, {\it A periodic Green function for calculation of coloumbic lattice potentials}, Journal of Physics: Condensed Matter, 12(21), (2000),4575--4601.
\bibitem{Ram21}
M. Ortiz Ramirez, {\it Lattice points in d-dimensional spherical segments}, Monatsh Math, vol. 194, (2021), 167--179.
\bibitem{Trans}
L. Roncal and P.  Stinga, {\it Transference of Fractional Laplacian Regularity}, in: Special Functions, Partial Differential Equations, and Harmonic Analysis, Springer (2014), 203--212.
\bibitem{stein}
E. Stein, {\it Localization and  Summability of Multiple
Fourier Series},
Acta Math.
Vol. 100, No. 1-2, (1958), 93--146.
\bibitem{Watson}
G. Watson,  {\it A Treatise on the Theory of Bessel Functions,} 2nd ed. Cambridge, England: Cambridge University Press, 1966.
\bibitem{ZI}
S. Zelik and A. Ilyin, {\it Green's function asymptotics and sharp interpolation inequalities}, Uspekhi Mat. Nauk, 69:2(416) (2014), 23–76;
\end{thebibliography}
\end{document}